\documentclass[11pt]{amsart}
\usepackage[usenames]{color}
\usepackage{amssymb}
\usepackage{latexsym}
\usepackage{graphics}
\usepackage{graphicx}
\usepackage{epstopdf}
\usepackage[all]{xy}
\usepackage{verbatim}

\usepackage{float}
\usepackage{placeins}
\input epsf

\def\TSG{{\mathrm{TSG}_+}}
\def\fix{{\mathrm{fix}}}
\def\Aut{{\mathrm{Aut}}}

\newtheorem{thm}{Theorem}

\newtheorem{lemma}{Lemma}

\newtheorem*{edge}{Edge Embedding Lemma}
\newtheorem*{subgroup}{Subgroup Lemma}
\newtheorem*{complete}{Complete Graph Theorem}
\newtheorem*{automorphism}{Automorphism Theorem}
\newtheorem*{realizability}{Realizability Lemma}
\newtheorem*{D2}{No-$D_2$ Lemma}
\newtheorem*{isometry}{Isometry Theorem}
\newtheorem*{DmSubgroups}{$D_m \times D_m$ Lemma}
\newtheorem*{A4}{$A_4$ Theorem}
\newtheorem*{S4}{$S_4$ Theorem}
\newtheorem*{A5}{$A_5$ Theorem}



\def\inv{{^{-1}}}

\def\Z{{\mathbb Z}}

\def\R{{\mathbb R}}

\def\a{{\alpha}}
\def\b{{\beta}}
\def\g{{\gamma}}
\def\G{{\Gamma}}

\def\f{{\phi}}

\def\s{{\sigma}}
\def\t{{\tau}}

\def\o{{\mathrm{order}}}

\def\TSG{{\mathrm{TSG_+}}}
\def\Aut{{\mathrm{Aut}}}
\def\Diff{{\mathrm{Diff_+}}}
\def\fix{{\mathrm{fix}}}

\newcommand{\x}{\times}

\newcommand{\orb}[1]{\langle #1 \rangle}

\def\so{{\mathrm{SO}}}
\def\gcd{{\mathrm{gcd}}}
\def\lcm{{\mathrm{lcm}}}

\begin{document}
\title[Classification]{Classification of Topological Symmetry Groups of $K_n$}
\author{Erica Flapan, Blake Mellor, Ramin Naimi, and Michael Yoshizawa}

\subjclass{57M25, 05C10}

\keywords{topological symmetry groups, spatial graphs}

\address{Department of Mathematics, Pomona College, Claremont, CA 91711, USA}

\address{Department of Mathematics, Loyola Marymount University, Los Angeles, CA 90045, USA}

\address{Department of Mathematics, Occidental College, Los Angeles, CA 90041, USA}

\address{Department of Mathematics, UC Santa Barbara, Santa Barbara, CA, USA}

\date \today

\thanks{This research was supported in part by NSF grants DMS-0905087, DMS-0905687 and DMS-0905300.}

\begin{abstract}

In this paper we complete the classification of topological symmetry groups for complete graphs $K_n$ by characterizing which $K_n$ can have a cyclic group, a dihedral group, or a subgroup of $D_m \times D_m$ for odd $m$, as its topological symmetry group.
Using this classification, one can algorithmically determine all possible topological symmetry groups of $K_n$ for any $n$.

\end{abstract}

\maketitle

\section{Introduction}\label{S:intro}
The symmetries of a molecule affect many of its properties, including its reactions, its crystallography, its spectroscopy, and its quantum chemistry.  Molecular symmetries are also an important tool in classifying molecules.  Chemists have long used the group of rotations, reflections, and reflections composed with rotations as a means of representing the symmetries of a molecule.  This group is known as the {\it point group} because its elements  fix a point of $\mathbb{R}^3$.  But using the point group to represent molecular symmetries only makes sense for rigid molecules.  In fact, while most small molecules are rigid, larger molecules can be flexible, and some molecules are rigid except for pieces that can rotate around specific bonds. On the left of   Figure~\ref{flexible} we illustrate a molecular M\"{o}bius ladder which is flexible because of its length, and on the right we illustrate a molecule that has pieces on either side that rotate around certain bonds.  
\begin{figure} [h]
\includegraphics[height=2.7cm]{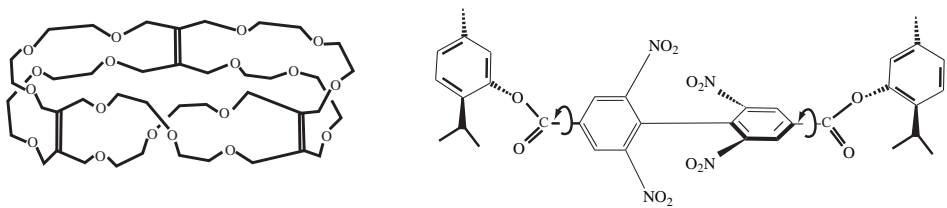}
\caption{The molecule on the left is flexible, while the one on the right has pieces that rotate around certain bonds. }
\label{flexible}
\end{figure}

The amount of rigidity a molecule has depends on its chemistry, not just on its length or its geometry.  Thus a purely mathematical characterization of all molecular symmetry groups is not possible.  The point group treats all molecules as if they are rigid objects.   We now take the opposite approach and consider molecules as flexible graphs embedded in $\mathbb{R}^3$.  In particular, we define the {\it topological symmetry group} of an embedded graph $\Gamma$ to be the group of automorphisms of $\Gamma$ that are induced by homeomorphisms of $\mathbb{R}^3$.  

 In order to illustrate the difference between the point group and the topological symmetry group, consider the molecular M\"{o}bius ladder illustrated on the left in Figure~\ref{flexible}.  Any symmetry of this molecule must take the set of three double bonds to itself.  The point group of the molecular M\"{o}bius ladder is $\mathbb{Z}_2$ because its only rigid symmetry is obtained by turning the graph over left to right.  Its topological symmetry includes this rigid symmetry as well as an order six automorphism of the graph induced by rotating the molecule by $120^{\circ}$ while slithering the half-twist back to its original position.  Thus the topological symmetry group is the dihedral group with 12 elements, which we denote by $D_6$.

Jon Simon \cite{si} first defined the
{\it topological symmetry group} in order to study symmetries of non-rigid molecules.   However this group can be used to characterize the topological symmetries of any graph embedded in $\mathbb{R}^3$ or $S^3$.  In fact, the study of  graphs embedded in $S^3$ can be thought of as an extension of knot theory, since a knot with vertices on it is an embedded graph.  Furthermore, given the extensive study of symmetries in knot theory, it is natural to consider symmetries of embedded graphs.

Formally, we introduce the following terminology. Let $\gamma$ be an abstract graph, and let $\Aut(\gamma)$ denote the automorphism group of $\gamma$.  Let $\Gamma$ be the image of an embedding of $\gamma$ in $S^3$.  The \emph{topological symmetry group} of $\Gamma$, denoted by $\mathrm{TSG}(\Gamma)$, is the subgroup of $\Aut(\gamma)$ which is induced by homeomorphisms of the pair $(S^3,\Gamma)$. The {\it orientation preserving topological symmetry group} of $\Gamma$, denoted by $\TSG(\Gamma)$, is the subgroup of $\Aut(\gamma)$ which is induced by orientation preserving homeomorphisms of the pair $(S^3,\Gamma)$. Observe that $\TSG(\Gamma)$ is either equal to $\mathrm{TSG}(\Gamma)$ or is a normal subgroup of $\mathrm{TSG}(\Gamma)$ of index two.  In this paper we are only concerned with $\TSG(\Gamma)$, and thus for simplicity we abuse notation and refer to the group $\TSG(\Gamma)$ simply as the {\it topological symmetry group} of $\Gamma$.

Frucht \cite{fr} showed that every finite group is the automorphism group of some connected graph.  Since every graph can be embedded in $S^3$, it is natural to ask whether every finite group can be  $\TSG(\Gamma)$ 
for some connected graph $\Gamma$ embedded in $S^3$.   Flapan, Naimi, Pommersheim, and Tamvakis \cite{fnpt} answered this question in the negative, proving that there are strong restrictions on which groups can occur as topological symmetry groups for some embedded graph.  For example, they prove that $\TSG(\Gamma)$ can never be the alternating group $A_n$ for $n>5$.  Furthermore, they show that for any 3-connected graph $\Gamma$, the group  $\TSG(\Gamma)$ is isomorphic to a finite subgroup of the group $\mathrm{Diff}_+(S^3)$ of orientation preserving diffeomorphisms of $S^3$.
Their proofs use the topological machinery of the Jaco-Shalen \cite{JS} and Johannson \cite{Jo} Characteristic Submanifold Theorem, together with Thurston's Hyperbolization  Theorem \cite{Th}, and Mostow's Rigidity Theorem \cite{Th}.

The special case of the topological symmetry groups of complete graphs  is interesting to consider because a complete graph $K_n$ has the largest automorphism group of any graph with $n$ vertices.  
Building on the result that any $\TSG(\Gamma)$ for an embedded 3-connected graph $\Gamma$ is isomorphic to a finite subgroup of $\mathrm{Diff}_+(S^3)$, Flapan, Naimi, and Tamvakis \cite{fnt} characterized which subgroups of $\mathrm{Diff}_+(S^3)$ can occur as $\TSG(\Gamma)$ for some embedding of a complete graph in $S^3$.  In particular they proved the following theorem.
\medskip \begin{complete} \cite{fnt}
\label{T:TSG2} 
A finite group $H$ is $\TSG(\Gamma)$ for some embedding $\Gamma$ of a complete graph in $S^3$ if and only if $H$ is isomorphic to a finite cyclic group, a dihedral group, $A_4, S_4, A_5$, or a finite subgroup of $D_m \times D_m$ for some odd $m$.
\end{complete}

As the first step in the proof of the above result, the authors show that every topological symmetry group of a complete graph can be induced by a finite group of orientation preserving isometries of $S^3$ for some embedding of the graph in $S^3$.  Then they consider two cases according to whether or not the group of isometries inducing $\TSG(\Gamma)$ preserves a standard Hopf fibration of $S^3$.  Finally, they give examples to demonstrate that each of the groups listed in the Complete Graph Theorem can actually occur as $\TSG(\Gamma)$ for some embedding $\Gamma$ of a complete graph in $S^3$.

While the Complete Graph Theorem tells us which groups can occur as topological symmetry groups of some complete graph in $S^3$, it does not tell us for a given group $G$ which complete graphs $K_n$ can have that group as its $\TSG(\Gamma)$ for some embedding $\Gamma$ of $K_n$ in $S^3$.  This is the question that we answer in this paper.  In the next section we will review the progress that has already been made on this problem, and discuss how the results of this paper complete the solution.

\section{Survey of the classification for complete graphs} \label{S:previous}

There has already been significant progress on the problem of determining which complete graphs can be embedded with a particular group as its topological symmetry groups.  In particular, Flapan, Mellor, and Naimi \cite{fmn2} proved the following theorems characterizing which complete graphs have an embedding $\Gamma$ such that $\TSG(\Gamma)$ is isomorphic to one of the {\it polyhedral} groups $A_4$, $S_4$ or $A_5$.

\begin{A4} \cite{fmn2}
A complete graph $K_m$ with $m\geq 4$ has an embedding $\Gamma$ in $S^3$ such that  $\TSG(\Gamma) \cong A_4$ if and only if $m \equiv 0$, $1$, $4$, $5$, $8 \pmod {12}$.
\end{A4}

\begin{A5} \cite{fmn2}
A complete graph $K_m$ with $m\geq 4$ has an embedding $\Gamma$ in $S^3$ such that $\TSG(\Gamma) \cong A_5$ if and only if $m \equiv 0$, $1$, $5$, $20 \pmod{60}$.
\end{A5}

\begin{S4} \cite{fmn2}
A complete graph $K_m$ with $m\geq 4$ has an embedding $\Gamma$ in $S^3$ such that  $\TSG(\Gamma) \cong S_4$ if and only if $m \equiv 0$, $4$, $8$, $12$, $20 \pmod {24}$.
\end{S4}

In the current paper, we finish the classification of which complete graphs can have a given group as their topological symmetry group by considering cyclic groups, dihedral groups, and subgroups of $D_m \times D_m$ (for $m$ odd).  The subgroups of $D_m \times D_m$ (for $m$ odd) are given below.

\begin{DmSubgroups} \cite{fmn1}
Let $m \geq 3$ be odd, and let $G \leq D_m \times D_m$. Then $G$ is isomorphic to one of the following groups for some  $r$ and $s$, both odd and at least 3: $\Z_2$, $\Z_r$, $\Z_{2r}$, $D_2$, $D_r$, $D_{2r}$, $\Z_r \times \Z_s$, $\Z_r \times D_s$, $D_r \times D_s$, or $(\Z_r \times \Z_s)\rtimes \Z_2$ (where, if $x \in \Z_r \times \Z_s$ and $y \in \Z_2$, $xy = yx\inv$).
\end{DmSubgroups}

In particular, we prove the following three theorems.

\begin{thm} \label{T:Dm}
Let $G = \Z_m$ or $D_m$.  A complete graph $K_n$, $n > 6$, has an embedding $\Gamma$ such that $\TSG(\Gamma) = G$ 
if and only if one of the following conditions holds:
\begin{enumerate}
	\item $m \geq 4$ is even, and $n \equiv 0 \pmod m$.
	\item $m \geq 3$ is odd and $n \equiv 0, 1, 2, 3 \pmod m$.
	\item $G= D_2$, and $n \equiv 0, 1, 2 \pmod 4$.
	\item $G = \Z_2$.
\end{enumerate}
\end{thm}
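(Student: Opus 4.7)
The plan is to prove the two directions of the biconditional separately, and in both directions to use the Complete Graph Theorem to assume $G$ acts by orientation-preserving isometries of $S^3$.

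For the necessity direction with $G = \Z_m$, let $\rho$ be a generator, acting on $S^3$ as rotation about a great circle $A$. If $f$ denotes the number of vertices of $\Gamma$ on $A$, then the remaining vertices form free orbits of size $m$, so $n = f + mk$. Any edge joining two fixed vertices must be pointwise fixed by $\rho$ and hence lie on $A$; since $f$ vertices partition $A$ into $f$ arcs, we get $\binom{f}{2} \leq f$ and thus $f \leq 3$. When $m$ is odd this gives case (2) directly. When $m \geq 4$ is even, I would use the involution $\rho^{m/2}$: combining the Edge Embedding Lemma applied to the edges from a fixed vertex to a free orbit with the fact that $\rho^{m/2}$ is an orientation-preserving involution fixing $A$, one shows that $f \geq 1$ forces an extra isometric symmetry of $(S^3, \Gamma)$ that strictly enlarges $\TSG(\Gamma)$, contradicting the hypothesis $\TSG(\Gamma) = \Z_m$. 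Hence $f = 0$, yielding case (1). The $G = D_m$ argument parallels the $\Z_m$ case: the cyclic subgroup $\Z_m \leq D_m$ contributes the same $f \leq 3$ bound and, when $m$ is even, the same $f = 0$ restriction, while the dihedral reflection contributes no further congruence condition beyond compatibility.

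The $G = D_2$ case is handled by the No-$D_2$ Lemma, which gives the congruence $n \equiv 0, 1, 2 \pmod 4$. The $G = \Z_2$ case is immediate: for any $n$ an orientation-preserving involution of $S^3$ permuting a prescribed set of $n$ points exists, and $K_n$ admits an embedding with exactly this $\Z_2$ symmetry.

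For the sufficiency direction, I construct explicit embeddings in each valid congruence class. Place $f$ vertices on the axis $A$ and arrange $k = (n-f)/m$ free orbits on concentric circles around $A$ in a $G$-equivariant manner, then invoke the Edge Embedding Lemma to obtain a $G$-equivariant embedding of the edges. To cut $\TSG(\Gamma)$ down to exactly $G$, and not some larger group, apply the Knot Addition Lemma: tie carefully chosen chiral or non-invertible knots into edges of designated orbits to destroy any extraneous isometric symmetry of $S^3$ preserving the vertex set, while leaving the desired $G$-action intact.

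The principal obstacle is the argument that $f = 0$ is forced when $m \geq 4$ is even. The proof is delicate, because superficially one might hope to kill any extra symmetry by knotting, but the involution $\rho^{m/2}$ together with the edge structure at a fixed vertex pins down a reflection-type symmetry that survives every $\Z_m$-equivariant knot insertion. A secondary challenge, in the sufficiency direction, is verifying case by case that the chosen knots actually destroy all unwanted symmetries without inadvertently breaking $G$-equivariance; this is where the detailed combinatorics of edge orbits under $\Z_m$ and $D_m$ must be handled carefully.
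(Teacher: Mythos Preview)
Your necessity argument for even $m\ge 4$ has a genuine gap. You claim that if the generator $\rho$ of $\Z_m$ fixes at least one vertex on its axis $A$, then the involution $\rho^{m/2}$ together with the edge structure forces an \emph{extra} symmetry of $(S^3,\Gamma)$ enlarging $\TSG(\Gamma)$. This is not how the restriction arises, and no such argument can work: the same conclusion ($n\equiv 0\pmod m$) must hold for $G=D_m$, where any ``extra involution'' you produce could already lie in $D_m$ and contradict nothing. The correct obstruction is purely topological and is exactly the content of the Automorphism Theorem: for $n>6$, an automorphism of $K_n$ of even order $m>2$ can be induced by an orientation-preserving homeomorphism of $(S^3,\Gamma)$ \emph{only if} it has no fixed vertices and all cycles of length $m$. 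This is a nontrivial result about periodic homeomorphisms of $S^3$ (proved in \cite{fl2}); it is not a consequence of symmetry-counting or of the Edge Embedding Lemma, which is a construction tool, not an obstruction. Once you invoke the Automorphism Theorem, necessity for all $m\ge 3$ is a one-line consequence, and the $D_2$ case follows from the No-$D_2$ Lemma as you say.

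A related issue: you assume the order-$m$ generator is a rotation about a great circle $A$. For even $m$ this need not hold; indeed the paper's sufficiency construction for even $m$ uses a \emph{glide} rotation $h=h_1\circ h_2$, which is fixed-point free. Your sufficiency sketch (``place $f$ vertices on the axis $A$'') therefore also misses the even case. On the sufficiency side the paper proceeds as you outline --- embed vertices equivariantly, check the Edge Embedding Lemma, then apply the Subgroup Lemma to cut $\TSG$ down to exactly $G$ --- but the even-$m$ construction requires the glide-rotation model, and the final step uses the Subgroup Lemma (which packages the knot-addition argument) together with the Complete Graph Theorem rather than ad hoc knotting.
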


\begin{thm} \label{T:ZxZxZ}
Let $G =  \Z_r \x \Z_s$ or $(\Z_r \x \Z_s) \rtimes \Z_2$ where $r, s$ are odd and $\gcd(r,s) > 1$.  A complete graph $K_n$, with $n > 6$, has an embedding $\Gamma$ with $\TSG(\Gamma) = G$ if and only if one of the following conditions holds:
\begin{enumerate}
	\item $rs \vert n$.
	\item $\gcd(r,s) = 3$ and $rs \vert (n-3)$.
	\item $G = \Z_3 \x \Z_3$ and $9 \vert (n-6)$.
	\item $G = (\Z_3 \x \Z_3) \rtimes \Z_2$ and $18 \vert (n-6)$.
\end{enumerate}
\end{thm}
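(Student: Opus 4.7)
The plan is to prove both directions in analogy with the $A_4$, $S_4$, and $A_5$ theorems of \cite{fmn2}. For necessity, suppose $\Gamma$ is an embedding of $K_n$ in $S^3$ with $\TSG(\Gamma) = G$. By the Isometry Theorem of \cite{fnt}, I may assume $G$ is induced by a finite group of orientation-preserving isometries of $(S^3, \Gamma)$. Since $G \leq D_m \x D_m$ for some odd $m$, this isometry group preserves a Hopf fibration of $S^3$: the abelian subgroup $\Z_r \x \Z_s$ lies in the maximal torus $T^2 \subset \so(4)$, and in the semidirect-product case the extra involution $\tau$ fixes a great circle and inverts each toral factor.

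I then carry out an orbit analysis on the vertex set. The $T^2$-action has two exceptional fibers $F_1, F_2$ and free $2$-torus orbits elsewhere; for the subgroup $\Z_r \x \Z_s \leq T^2$, the orbits off $F_1 \cup F_2$ all have size $rs$, while an orbit on $F_i$ has size $rs/|H_i|$, where $H_i$ is the cyclic subgroup of $\Z_r \x \Z_s$ mapping into the $S^1$-stabilizer of $F_i$. Since cyclic subgroups of $\Z_r \x \Z_s$ have order dividing $\lcm(r,s) = rs/\gcd(r,s)$, obtaining an axis orbit of size exactly $3$ (with $r,s \geq 3$ odd and $\gcd(r,s) > 1$) forces $\gcd(r,s) = 3$ and $|H_i| = rs/3$. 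Summing orbit sizes yields $n \equiv 0 \pmod{rs}$ in general, or one extra residue of $3 \pmod{rs}$ when $\gcd(r,s)=3$ and one size-$3$ axis orbit is used. In the special case $r = s = 3$, both exceptional fibers can simultaneously carry size-$3$ orbits without forcing any additional symmetry because the two fibers are already exchangeable within the $G$-preserving isometry group; this yields the residue $6 \pmod 9$ for $\Z_3 \x \Z_3$, and $6 \pmod{18}$ for $(\Z_3 \x \Z_3) \rtimes \Z_2$ after checking how $\tau$ pairs the two size-$3$ orbits.

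For the sufficiency direction, given $n$ in one of the four listed residue classes I construct an embedding. Place the $n$ vertices on $S^3$ as a union of $G$-orbits of the corresponding sizes under an explicit isometric action of $G$: entirely generic orbits in case (1); one size-$3$ axis orbit in case (2); a pair of size-$3$ axis orbits in case (3); and the $\tau$-symmetric analogue in case (4). The Edge Embedding Lemma of \cite{fnt} then extends this vertex placement to a $G$-equivariant embedding $\Gamma$ of $K_n$ in $S^3$, so that $G \leq \TSG(\Gamma)$. The final and most delicate step is to verify that $\TSG(\Gamma) = G$ exactly: where the naive construction admits additional symmetries, I perturb orbit positions on each torus and apply the Knot Addition Lemma to add local knots to selected $G$-orbits of edges, destroying the unwanted automorphisms while preserving the $G$-action.

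The hard part will be this "no extra symmetries" verification. Because $\Z_r \x \Z_s$ is a proper subgroup of many other groups allowed by the Complete Graph Theorem (for example $\Z_r \x \Z_s \leq D_r \x D_s$ and $\Z_r \x \Z_s \leq (\Z_r \x \Z_s) \rtimes \Z_2$, and $\Z_3 \x \Z_3$ sits inside many larger toral-preserving groups when $r=s=3$), a generic $G$-equivariant embedding may accidentally realize one of these larger groups as $\TSG(\Gamma)$. Systematically ruling out each potential enlargement --- especially in the $r = s = 3$ case, where many overlaps occur, and in the semidirect-product case, where the $\tau$-fixed circle must be handled carefully --- using the Subgroup Lemma and the No-$D_2$ Lemma will constitute the bulk of the technical work, and it must be carried out in parallel with the necessity analysis to confirm that no residue class outside the four listed can be realized.
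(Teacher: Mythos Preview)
Your outline follows the same overall strategy as the paper---orbit counting for necessity, explicit isometric construction plus Edge Embedding for sufficiency---but you misjudge where the work lies, and one step in your necessity argument is a genuine gap.

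First, the $\Z_r \times \Z_s$ case is not proved in this paper at all; it is imported wholesale from \cite{cfo}, and the present proof only handles the semidirect product. Your orbit-counting sketch would reproduce most of that, but you should be aware you are redoing existing work.

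The real gap is your treatment of condition~(4). You assert that the residue $6 \pmod{18}$ (rather than $6 \pmod 9$) for $(\Z_3 \times \Z_3)\rtimes \Z_2$ follows ``after checking how $\tau$ pairs the two size-$3$ orbits.'' This is not the mechanism. The paper's Lemma~\ref{L:n=15} argues by contradiction: if $9\mid(n-6)$ with $n$ odd, the Automorphism Theorem forces the involution $\phi$ to fix exactly one vertex; but a case analysis of how $\phi$ acts on the two distinguished $3$-element sets $V$ and $W$ (using that $\phi$ conjugates $\rho$ and $\sigma$ to their inverses) shows $\phi$ must fix a vertex in each of $V$ and $W$, or swap $V$ with $W$---and the swap case is killed by a separate contradiction comparing $\phi\rho$ with $\phi\sigma$. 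Nothing in your orbit-size bookkeeping produces this parity obstruction; you need the fixed-point count for the involution, not just how $\tau$ permutes the axis orbits setwise.

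Conversely, you have overestimated the difficulty of the ``no extra symmetries'' step in sufficiency. The paper does not perturb orbits or hand-check potential enlargements. Once you have $(\Z_r\times\Z_s)\rtimes\Z_2 \le \TSG(\Gamma)$, the Complete Graph Theorem forces $\TSG(\Gamma)$ to lie in some $D_m\times D_m$ (since the semidirect product is not a subgroup of $\so(3)$), and then the Subgroup Lemma from \cite{fmn1} immediately gives a re-embedding $\Gamma'$ with $\TSG(\Gamma')$ equal to the desired group. No Knot Addition Lemma is invoked directly, and no case-by-case exclusion of $D_r\times D_s$ or other overgroups is needed. (Also, the Edge Embedding Lemma is from \cite{fmn2}, not \cite{fnt}.)
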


\begin{thm} \label{T:DxD}
Let $G =  \Z_r \x D_s$ or $D_r \times D_s$ where $r, s \geq 3$ are odd.  A complete graph $K_n$, with $n > 6$, has an embedding $\Gamma$ with $\TSG(\Gamma) = G$ if and only if one of the following conditions holds:
\begin{enumerate}
	\item $2rs \vert n$.
	\item $G = \Z_3 \x D_3$ and $18 \vert (n-6)$.
	\item $G = D_3 \times D_3$ and $36 \vert (n-6)$.
\end{enumerate}
\end{thm}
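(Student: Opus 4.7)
The plan is to prove the iff in both directions, following the template of the predecessor theorems for cyclic, dihedral, and polyhedral groups. For necessity, suppose $\Gamma \subset S^3$ is an embedding of $K_n$ with $\TSG(\Gamma) = G$. By the Complete Graph Theorem, $G$ is induced by a group of orientation-preserving isometries of $(S^3,\Gamma)$, and since $G$ is a subgroup of $D_m \times D_m$ for some odd $m$, the action preserves a standard Hopf fibration with two axis circles $A, B \subset S^3$. The $\Z_r$ subgroup rotates $A$ while fixing $B$ pointwise, $\Z_s$ rotates $B$ while fixing $A$, and any involution in either dihedral factor has a great circle as its fixed set. I would then enumerate possible vertex stabilizers consistent with this geometry: since $r, s \geq 3$ are odd, an order-$r$ element stabilizes only points lying on $B$, an order-$s$ element only points on $A$, and an involution stabilizes only the two vertices (if any) at which its fixed circle meets the vertex set, producing a short list of possible orbit sizes.

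Next I would perform the orbit-sum bookkeeping. Generic off-axis orbits contribute $|G|$ to $n$, and orbits fixed by one involution contribute $|G|/2$; when $r \geq 5$ or $s \geq 5$ these will be the only contributions compatible with $\TSG(\Gamma) = G$ exactly, forcing $2rs \mid n$. In the case $r = s = 3$, the small orders additionally admit a single orbit of six vertices lying on one axis circle with stabilizer $\Z_3$ (for $G = \Z_3 \times D_3$) or stabilizer of order $6$ (for $G = D_3 \times D_3$), yielding the $n \equiv 6 \pmod{18}$ and $n \equiv 6 \pmod{36}$ cases respectively. Ruling out other orbit configurations when $r, s \geq 5$ will combine the subgroup structure from the $D_m \times D_m$ Lemma with the Edge Embedding Lemma, since excess vertices on an axis would force the edges between them to carry symmetries beyond those in $G$.

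For sufficiency, I would construct an embedding for each allowed $(G, n)$ as follows: start with the isometric action of $G$ on $S^3$, place $n/|G|$ generic vertex orbits in case (1), or a single axis orbit of $6$ vertices together with generic orbits filling the remaining $n - 6$ vertices in cases (2), (3). Then apply the Edge Embedding Lemma to extend the equivariant vertex configuration to an equivariant embedding of $K_n$. To ensure $\TSG(\Gamma)$ equals $G$ exactly rather than a strictly larger group, I would invoke the Knot Addition Lemma to tie distinct prime knots into representative edges of different edge orbits, so that any graph automorphism induced by a homeomorphism must respect the knot pattern and therefore lie in $G$.

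The main obstacle I anticipate is the necessity direction's case analysis: showing that for $\max(r,s) \geq 5$ no orbit configuration beyond ``generic + involution-fixed'' is compatible with $\TSG(\Gamma) = G$, and that the only additional configurations that arise when $r = s = 3$ are those producing the $n \equiv 6$ residues. This requires combining the algebraic enumeration of stabilizer subgroups in $G$ with geometric constraints on how many vertices can lie on a single great circle, and then applying the Edge Embedding Lemma to exclude each candidate configuration one by one.
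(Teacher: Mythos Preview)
Your proposal has a genuine error in its geometric model of the action, and this propagates through both directions of the argument.

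You assume that in the isometric realization of $\Z_r \times D_s$ (or $D_r \times D_s$) the generator $\rho$ of $\Z_r$ fixes one Hopf axis $B$ pointwise while $\sigma$ fixes the other axis $A$ pointwise. In fact this is impossible: the paper shows (Lemma~\ref{lemma-ZrxDs-isometries}) that in any standard presentation $\langle\rho,\sigma,\beta\rangle$ of $\Z_r \times D_s \le \mathrm{SO}(4)$ with $r,s\ge 3$ and $r$ odd, every nontrivial power $\rho^i$ and $\sigma^j$ is \emph{fixed-point free}. The reason is exactly the dihedral relation: if $\fix(\sigma^j)$ were a circle $C$, then $\beta$ would reverse orientation on $C$ and hence fix two points on it, while $\rho$ (commuting with both $\beta$ and $\sigma^j$) would have to setwise fix both $C$ and $\fix(\beta)$, hence fix their two intersection points, hence be trivial. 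So $\rho$ and $\sigma$ are necessarily glide rotations, not rotations about axis circles. Your orbit bookkeeping (``an order-$r$ element stabilizes only points lying on $B$'', etc.) and your description of the six special vertices as ``lying on one axis circle with stabilizer $\Z_3$'' therefore do not match the actual geometry; in the paper's analysis the six vertices in the $r=s=3$ case sit three on $A$ and three on $B$, and $\beta$ (and, for $D_3\times D_3$, also $\alpha$) interchanges $A$ with $B$.

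A second issue: you plan to use the Edge Embedding Lemma in the necessity direction to ``exclude each candidate configuration''. That lemma is a construction tool, not an obstruction; necessity in the paper is handled instead by the Automorphism Theorem (bounding fixed vertices of each element) combined with the fixed-point-free lemma above and a short argument (Lemma~\ref{lemma-ZrxDs-nonrealizable}) that any vertex fixed by a nontrivial element forces $r=s=3$ and $2rs\mid(n-6)$. The extra divisibility for $D_3\times D_3$ (Lemma~\ref{lemma-DrDsNonrealiz}) comes from showing that $\phi=\alpha\beta$ would otherwise fix three vertices, contradicting the Automorphism Theorem. On the sufficiency side, your plan to ``place $n/|G|$ generic orbits'' fails when $G = D_r\times D_s$ and $2rs\mid n$ but $4rs\nmid n$; the paper handles this by adding an orbit of size $2rs$ through a point on $\fix(\alpha\beta)$ and then verifies the Edge Embedding Lemma conditions carefully. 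Finally, the paper obtains $\TSG(\Gamma')=G$ exactly not by knotting edges directly but via the Complete Graph Theorem (which forces $\TSG(\Gamma)\le D_m\times D_m$ once $\Z_r\times D_s \le \TSG(\Gamma)$) together with the Subgroup Lemma.
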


The proofs of Theorems \ref{T:Dm}, \ref{T:ZxZxZ} and \ref{T:DxD} use many of the methods and results developed to prove the theorems for $A_4$, $S_4$ and $A_5$.  For each group $G$, we need to show that the given restrictions are {\em necessary}, meaning that an embedding of $K_n$ realizing $G$ is possible only if $n$ satisfies the restrictions, and {\em sufficient}, meaning that we can construct an embedding of $K_n$ realizing $G$ whenever $n$ satisfies the restrictions.  These two aspects of the problem are usually treated separately.

\subsection{Necessity}

The first step in determining which restrictions are {\em necessary} on $K_n$ to realize a group $G$ is to establish restrictions on which automorphisms of $K_n$ can be induced by homeomorphisms of some embedding of the graph in $S^3$, and hence which elements of $\Aut(K_n)$ could possibly be elements of the topological symmetry group of some embedding of $K_n$.  This is done using the Automorphism Theorem \cite{fl2}.

\begin{automorphism} \cite{fl2}
Let $K_n$ be a complete graph on $n > 6$ vertices and let $\phi$ be an automorphism of $K_n$. Then there is an embedding $\Gamma$ of $K_n$ in $S^3$ such that $\phi$ is induced by an orientation preserving homeomorphism $h$ of $(S^3,\Gamma)$ of order $m$ if and only if the cycles and fixed vertices of $\phi$ can be described by one of the following:
\begin{enumerate}
	\item $m > 2$ is even, all cycles of $\phi$ are of order $m$, and $\phi$ fixes no vertices. 
	\item $m = 2$, all cycles of $\phi$ are of order $m$, and $\phi$ fixes at most two vertices. 
	\item $m$ is odd, all cycles of $\phi$ are of order $m$, and $\phi$ fixes at most three vertices. 
	\item $m$ is an odd multiple of 3 and $m > 3$, all cycles of $\phi$ are of order $m$ except one of order 3, and $\phi$ fixes no vertices.
\end{enumerate}
\end{automorphism}

\noindent{\sc Remark:}  The hypothesis that $n>6$ in the Automorphism Theorem is the reason for the same hypothesis in the statements of Theorems \ref{T:Dm}, \ref{T:ZxZxZ} and \ref{T:DxD}.  Chambers and Flapan \cite{ch} have characterized which groups can occur as $\TSG(\Gamma)$ or $\mathrm{TSG}(\Gamma)$ for each of the complete graphs with $n\leq 6$ vertices.
\medskip

The problem then becomes placing restrictions on possible {\em combinations} of automorphisms.  This is made much easier by the Isometry Theorem which follows from \cite{fnt} together with the Geometrization Theorem \cite{mf}.  This theorem allows us to assume that, roughly speaking, the topological symmetry group of an embedded complete graph is induced by isometries.

\begin{isometry}
Let $\Omega$ be an embedding of some $K_{n}$ in $S^3$.  Then $K_n$ can be re-embedded in $S^3$ as $\Gamma$ such that $\TSG(\Omega) \leq \TSG(\Gamma)$ and $\TSG(\Gamma)$ is induced by an isomorphic finite subgroup of $\so(4)$.
\end{isometry}
\medskip

Assuming that the elements of the topological symmetry group are isometries places significant restrictions on their fixed-point sets and how they intersect.  These, together with the conditions provided by the Automorphism Theorem, allow us to make geometric and combinatorial arguments to decide which complete graphs $K_n$ can be embedded with a given topological symmetry group $G$.

An important ingredient in many of these arguments is to use the No-$D_2$ Lemma \cite{cfo} to rule out topological symmetry groups containing $D_2$ for the graphs $K_{4r+3}$.

\begin{D2} \cite{cfo}
There is no embedding $\Gamma$ of $K_{4r+3}$ in $S^3$ such that $D_2 \leq \TSG(\Gamma)$.
\end{D2}

\subsection{Sufficiency}

To prove that the restrictions are sufficient, we need to prove that when they are satisfied, it is possible to realize the group $G$ as the topological symmetry group of $K_n$.  This is done in several stages.  First, we identify a group of isometries isomorphic to $G$ (since the Isometry Theorem assures us that if $G$ can be realized, then it can be realized using isometries), and embed the vertices of the graph so that the set of vertices is invariant under the action of $G$.  We then wish to extend the embedding to the edges of the graph so that the entire graph is invariant under $G$; this is relatively simple if the action of $G$ does not fix any of the vertices of the graph; otherwise, various conditions must be met.  These conditions are given by the Edge Embedding Lemma \cite{fmn2}.

\begin{edge} \cite{fmn2} 
Let $G$ be a finite subgroup of $\Diff(S^3)$, and let $\g$ be a graph whose vertices are embedded in $S^3$ as a set $V$ which is invariant under $G$ such that $G$ induces a faithful action on $\g$.  Suppose that adjacent pairs of vertices in $V$ satisfy the following hypotheses:
\begin{enumerate}
	\item If a pair is pointwise fixed by non-trivial elements $h, g \in G$, then $\fix(h) = \fix(g)$.
	\item For each pair $\{v, w\}$ in the fixed point set $C$ of some non-trivial element of $G$, there is an arc $A_{vw} \subseteq C$ bounded by $\{v,w\}$ whose interior is disjoint from $V$ and from any other such arc $A_{v'w'}$ 
	\item If a point in the interior of some $A_{vw}$ or a pair $\{v,w\}$ bounding some $A_{vw}$ is setwise invariant under an $f \in G$, then $f(A_{vw}) = A_{vw}$.
	\item If a pair is interchanged by some $g \in G$, then the subgraph of $\g$ whose vertices are pointwise fixed by $g$ can be embedded in a proper subset of a circle.
	\item If a pair is interchanged by some $g \in G$, then $\fix(g)$ is non-empty, and $\fix(h) \neq \fix(g)$ if $h \neq g$.
\end{enumerate}
Then the embedding of the vertices of $\g$ can be extended to the edges of $\g$ in $S^3$ such that the resulting embedding of $\g$ is setwise invariant under $G$.
\end{edge}

Much of the work in the proofs of Theorems \ref{T:Dm}, \ref{T:ZxZxZ} and \ref{T:DxD} lies in verifying that the conditions of the Edge Embedding Lemma are satisfied.  Once the embedding is extended to the edges, we have an embedding whose topological symmetry group {\it contains} $G$, but it may be larger.  The final step is to add local knots to the edges of the embedding so as to reduce the topological symmetry group to $G$.  There are some subtle topological issues that must be dealt with to show that this can be done; these were addressed first in the Knot Embedding Lemma \cite{fnpt} and further in the Knot Addition Lemma \cite{fmn1}.  The proofs of these results use the machinery of pared manifolds and the Characteristic Submanifold Theorem previously mentioned in Section \ref{S:intro}.

The case when the vertices can be embedded so there are no fixed vertices was solved first by Flapan, Naimi and Tamvakis \cite{fnt}, giving the Realizability Lemma (which depended on the earlier Knot Embedding Lemma).

 \begin{realizability} \cite{fnt}
{\it Let $\gamma $ be a $3$-connected graph, and let $H \leq\Aut(\gamma )$
be such that no vertex is fixed by any non-trivial element of $H$.
Suppose that $H$ is isomorphic to a subgroup $G\leq \so(4)$ such that every involution of $G$ has non-empty fixed point set and if $g\in G$ is an involution then no $h\in G$ distinct from $g$ has $\fix(h)=\fix(g)$. Then there is an embedding $\Gamma$ of $\gamma$ in $S^3$ such that
$H=\TSG(\Gamma )$ and $H$ is induced by $G$.}
 \end{realizability}

When the vertices of the graph are {\em not} fixed, we combine the Edge Embedding Lemma with the Subgroup Lemma \cite{fmn1} (proven using the Knot Addition Lemma).  This allows us to embed the graph with a topological symmetry group that may be too large, but then reduce the topological symmetry group as desired.

\begin{subgroup} \cite{fmn1}
Suppose $\Gamma$ is an embedding of a graph in $S^3$ such that 
either 
(i)~$\Gamma$ is 3-connected and 
contains an edge $e$ which is not pointwise fixed by any non-trivial element of $\TSG(\Gamma)$;
or
(ii)~$\Gamma$ is isomorphic to $K_n$ for some $n > 6$ 
and  $\TSG(\Gamma)$ is a finite cyclic group, a dihedral group, or a subgroup of $D_m \times D_m$ for some odd $m$. 
Then for every (possibly trivial) subgroup $H$ of $\TSG(\Gamma)$, 
there is an embedding $\Gamma'$ of $K_n$ such that $H = \TSG(\Gamma')$.
\end{subgroup}

\subsection{Examples}

Combining Theorems \ref{T:Dm}, \ref{T:ZxZxZ} and \ref{T:DxD} with the results of \cite{ch} and \cite{fmn2} enables one to determine all possible (orientation-preserving) topological symmetry groups of a given $K_n$ completely algorithmically.  For example, Table~\ref{T:examples} lists, for each $n= 2, 3, \cdots, 20$, every (orientation-preserving) topological symmetry group that can occur for some embedding of a complete graph $K_n$ .

\begin{table}[t]
\begin{tabular}{|c|l|l|l|l|}
\hline
Graph & \parbox{.75in}{Polyhedral Groups} & $\Z_m$ and $D_m$ & \parbox{1in}{\rule{0pt}{.2in}$\Z_r \x \Z_s$ and \\$(\Z_r \x \Z_s) \rtimes \Z_2$\smallskip} & \parbox{.8in}{\rule{0pt}{.2in}$\Z_r \x D_s$ and \\$D_r \x D_s$\smallskip} \\ \hline
$K_2$ & None & $\Z_2$ & None & None \\ \hline
$K_3$ & None & \parbox{2in}{\rule{0pt}{.2in}$\Z_3$, $D_3$\smallskip} & None & None \\ \hline
$K_4$ & $A_4$, $S_4$ & \parbox{2in}{\rule{0pt}{.2in}$\Z_2$, $\Z_3$, $\Z_4$, $D_2$, $D_3$, $D_4$\smallskip} & None & None \\ \hline
$K_5$ & $A_4$, $A_5$ & \parbox{2in}{\rule{0pt}{.2in}$\Z_2$, $\Z_3$, $\Z_5$, $D_2$, $D_3$, $D_5$\smallskip} & None & None \\ \hline
$K_6$ & None & \parbox{2in}{\rule{0pt}{.2in}$\Z_2$, $\Z_3$, $\Z_5$, $\Z_6$, $D_2$, $D_3$, $D_5$, $D_6$\smallskip} & \parbox{1in}{\rule{0pt}{.2in}$\Z_3 \x \Z_3$, \\$(\Z_3 \x \Z_3) \rtimes \Z_2$} & \parbox{.8in}{\rule{0pt}{.2in}$\Z_3 \x D_3$, \\$D_3 \x D_3$} \\ \hline
$K_7$ & None & \parbox{2in}{\rule{0pt}{.2in}$\Z_2$, $\Z_3$, $\Z_5$, $\Z_7$, $D_3$, $D_5$, $D_7$\smallskip} & None & None \\ \hline
$K_8$ & $A_4$, $S_4$ & \parbox{2in}{\rule{0pt}{.2in}$\Z_2$, $\Z_3$, $\Z_4$, $\Z_5$, $\Z_7$, $\Z_8$, \\$D_2$, $D_3$, $D_4$, $D_5$, $D_7$, $D_8$\smallskip} & None & None \\ \hline
$K_9$ & None & \parbox{2in}{\rule{0pt}{.2in}$\Z_2$, $\Z_3$, $\Z_7$, $\Z_9$, $D_2$, $D_3$, $D_7$, $D_9$\smallskip} & \parbox{1in}{\rule{0pt}{.2in}$\Z_3 \x \Z_3$,\\$(\Z_3 \x \Z_3) \rtimes \Z_2$\smallskip}  & None \\ \hline
$K_{10}$ & None & \parbox{2in}{\rule{0pt}{.2in}$\Z_2$, $\Z_3$, $\Z_5$, $\Z_7$, $\Z_9$, $\Z_{10}$, \\$D_2$, $D_3$, $D_5$, $D_7$, $D_9$, $D_{10}$\smallskip} & None & None \\ \hline
$K_{11}$ & None & \parbox{2in}{\rule{0pt}{.2in}$\Z_2$, $\Z_3$, $\Z_5$, $\Z_9$, $\Z_{11}$, \\$D_3$, $D_5$, $D_9$, $D_{11}$\smallskip} & None & None \\ \hline
$K_{12}$ & $A_4$, $S_4$ & \parbox{2.3in}{\rule{0pt}{.2in}$\Z_2$, $\Z_3$, $\Z_4$, $\Z_5$, $\Z_6$, $\Z_9$, $\Z_{11}$, $\Z_{12}$, \\$D_2$, $D_3$, $D_4$, $D_5$, $D_6$, $D_9$, $D_{11}$, $D_{12}$\smallskip} & \parbox{1in}{\rule{0pt}{.2in}$\Z_3 \x \Z_3$,\\$(\Z_3 \x \Z_3) \rtimes \Z_2$\smallskip} & None \\ \hline
$K_{13}$ & $A_4$ & \parbox{2in}{\rule{0pt}{.2in}$\Z_2$, $\Z_3$, $\Z_5$, $\Z_{11}$, $\Z_{13}$, \\$D_2$, $D_3$, $D_5$, $D_{11}$, $D_{13}$\smallskip} & None & None \\ \hline
$K_{14}$ & None & \parbox{2in}{\rule{0pt}{.2in}$\Z_2$, $\Z_3$, $\Z_7$, $\Z_{11}$, $\Z_{13}$, $\Z_{14}$, \\$D_2$, $D_3$, $D_7$, $D_{11}$, $D_{13}$, $D_{14}$\smallskip} & None & None \\ \hline
$K_{15}$ & None & \parbox{2in}{\rule{0pt}{.2in}$\Z_2$, $\Z_3$, $\Z_5$, $\Z_7$, $\Z_{13}$, $\Z_{15}$, \\$D_3$, $D_5$, $D_7$, $D_{13}$, $D_{15}$\smallskip} & \parbox{1in}{\rule{0pt}{.2in}$\Z_3 \x \Z_3$\smallskip} & None \\ \hline
$K_{16}$ & $A_4$ & \parbox{2in}{\rule{0pt}{.2in}$\Z_2$, $\Z_3$, $\Z_4$, $\Z_5$, $\Z_7$, $\Z_8$, $\Z_{13}$, $\Z_{15}$, $\Z_{16}$, $D_2$, $D_3$, $D_4$, $D_5$, $D_7$, $D_8$, $D_{13}$, $D_{15}$, $D_{16}$\smallskip} & None & None \\ \hline
$K_{17}$ & $A_4$ & \parbox{2in}{\rule{0pt}{.2in}$\Z_2$, $\Z_3$, $\Z_5$, $\Z_7$, $\Z_{15}$, $\Z_{17}$, \\$D_2$, $D_3$, $D_5$, $D_7$, $D_{15}$, $D_{17}$\smallskip} & None & None \\ \hline
$K_{18}$ & None & \parbox{2in}{\rule{0pt}{.2in}$\Z_2$, $\Z_3$, $\Z_5$, $\Z_6$, $\Z_9$, $\Z_{15}$, $\Z_{17}$, $\Z_{18}$, $D_2$, $D_3$, $D_5$, $D_6$, $D_9$, $D_{15}$, $D_{17}$, $D_{18}$\smallskip} & \parbox{1in}{\rule{0pt}{.2in}$\Z_3 \x \Z_3$,\\$(\Z_3 \x \Z_3) \rtimes \Z_2$\smallskip} & \parbox{.75in}{\rule{0pt}{.2in}$\Z_3 \x D_3$, \\$D_3 \x D_3$\smallskip} \\ \hline
$K_{19}$ & None & \parbox{2in}{\rule{0pt}{.2in}$\Z_2$, $\Z_3$, $\Z_9$, $\Z_{17}$, $\Z_{19}$, \\$D_3$, $D_9$, $D_{17}$, $D_{19}$\smallskip} & None & None \\ \hline
$K_{20}$ & $A_4$, $S_4$, $A_5$ & \parbox{2in}{\rule{0pt}{.2in}$\Z_2$, $\Z_3$, $\Z_4$, $\Z_5$, $\Z_9$, $\Z_{10}$, $\Z_{17}$, $\Z_{19}$, $\Z_{20}$, $D_2$, $D_3$, $D_4$, $D_5$, $D_9$, $D_{10}$, $D_{17}$, $D_{19}$, $D_{20}$\smallskip} & None & None \\ \hline
\end{tabular}
\medskip
\caption{Topological symmetry groups for $K_n$ with $2 \leq n \leq 20$.} \label{T:examples}
\end{table}

As a final more elaborate application of our results, we give the following list of all possible groups that can be $\TSG(\Gamma)$ for embeddings of the complete graph $K_{140}$ in $S^3$.  Note that $140 = 4 \x 5 \x 7$: \begin{itemize}
	\item $A_4$, $S_4$, $A_5$
	\item $\Z_2$, $\Z_3$, $\Z_4$, $\Z_5$, $\Z_7$, $\Z_{10}$, $\Z_{14}$, $\Z_{20}$, $\Z_{23}$, $\Z_{28}$, $\Z_{35}$, $\Z_{69}$, $\Z_{70}$, $\Z_{137}$, $\Z_{139}$, $\Z_{140}$
	\item $D_2$, $D_3$, $D_4$, $D_5$, $D_7$, $D_{10}$, $D_{14}$, $D_{20}$, $D_{23}$, $D_{28}$, $D_{35}$, $D_{69}$, $D_{70}$, $D_{137}$, $D_{139}$, $D_{140}$
	\item $\Z_5 \x D_7$, $\Z_7 \x D_5$, $D_5 \x D_7$
\end{itemize}
\noindent{\sc Remark:} Since $\gcd(5,7) = 1$, Theorem \ref{T:ZxZxZ} gives no additional groups.
\bigskip

The remainder of the paper is devoted to the proofs of Theorems \ref{T:Dm}, \ref{T:ZxZxZ} and \ref{T:DxD}.  In Section \ref{S:Dm} we  prove Theorem \ref{T:Dm}, in Section \ref{ZxZxZ}, we prove Theorem \ref{T:ZxZxZ}, and in Section \ref{S:necessary} we prove Theorem  \ref{T:DxD}.  

\FloatBarrier

\section{Proof of Theorem \ref{T:Dm}} \label{S:Dm}

Theorem \ref{T:Dm} describes when a complete graph has an embedding whose topological symmetry group is either cyclic or dihedral.  Throughout, the notation $\orb{S}_G$ denotes the orbit of a set of points $S$ under the action of a group $G$.

\begin{proof}
Since this proof is somewhat technical, we will divide it into the following steps: \begin{itemize}
	\item[{\bf Step 1:}] Prove the {\em necessity} of the conditions.
	\item[{\bf Step 2:}] Show that part (4) follows from parts (1) and (2).
	\item[{\bf Step 3:}] Show that under the conditions of part (1), $D_m \le \TSG(\Gamma)$ for some embedding $\Gamma$ of $K_n$.
	\item[{\bf Step 4:}] Show that under the conditions of part (2), $D_m \le \TSG(\Gamma)$ for some embedding $\Gamma$ of $K_n$.
	\item[{\bf Step 5:}] Show that under the conditions of part (3), $D_m \le \TSG(\Gamma)$ for some embedding $\Gamma$ of $K_n$.
	\item[{\bf Step 6:}]  Show that if $D_m \le \TSG(\Gamma)$ for some embedding $\Gamma$ of $K_n$, then or $G = \Z_m$ or $D_m$, $\TSG(\Gamma') = G$ for some embedding $\Gamma'$ of $K_n$.
\end{itemize}
\bigskip

\subsection*{\bf Step 1: Necessity.}
Since $n > 6$, and $\Z_m$ and $D_m$ each contain an element of order $m$, the necessity of the conditions for $m \geq 3$ is immediate from the Automorphism Theorem.  If $m = 2$, the restriction that $n \not\equiv 3 \pmod 4$ for $D_2$ comes from the No-$D_2$ Lemma.

\subsection*{\bf Step 2: $\TSG = \Z_2$.}
Given any $n >6$,
we let $m=n$, so $n \equiv 0 \pmod m$, 
and hence  $D_m \le \TSG(\G)$
for some embedding $\Gamma$ of $K_n$, by parts (1) and (2).
Since $\Z_2 \le D_m$, it follows from the Subgroup Lemma that 
$\TSG(\G') = \Z_2$
for some embedding $\G'$ of $K_n$, proving part (4).

\subsection*{\bf Step 3: $m \geq 4$ is even.}
Suppose that $m \ge 4$ is even, and $n = mr$ for some $r$.
We describe a subgroup of $\so(4)$ isomorphic to $D_m$.
Let $h_1$ be a rotation of $S^3$ of order $m$ with fixed point set a geodesic circle $C_1$,
and $h_2$ be a rotation of $S^3$ of order $m/2$ with fixed point set a geodesic circle $C_2$ that is setwise fixed by $h_1$.
Let $h$ be the glide rotation $h_1 \circ h_2$ of order $m$,
and let $a$ be an involution of $S^3$ whose fixed point set $A$ intersects each of $C_1$ and $C_2$  orthogonally in two points.  Then the group $H = \orb{h, a} $ generated by $h$ and $a$ is isomorphic to  $D_m$.  

To embed the vertices of $K_n$, 
choose $\lfloor r/2 \rfloor$ points $v_1, \dots, v_{\lfloor r/2 \rfloor}$ in 
$S^3 - (C_1 \cup C_2 \cup \orb{A}_H)$
which have disjoint orbits under $H$, 
and embed $2m\lfloor r/2 \rfloor$ vertices as the orbits of the $v_i$'s under $H$.  
If $r$ is even, we've embedded all the vertices; 
if $r$ is odd, then we have $m$ vertices that have not yet been embedded.  
Embed one of these vertices as some $v\in A - (C_1 \cup C_2)$, 
and embed the remaining vertices as the points in $\orb{v}_H$.
Note that $h ^{m/2}$ is an involution that fixes $A$ setwise but not pointwise;
thus, there will be two vertices embedded on each of the $m/2$ circles in $\orb{A}_H$.

We have embedded the vertices of $K_n$ as a set $V$ 
which is invariant under $H$ so that $H$ induces a faithful action on $V$. 
In the case where $r$ is even, we embedded all vertices in $S^3 - (C_1 \cup C_2 \cup \orb{A}_H)$,
so no vertex is fixed by a nontrivial element of $H$. 
Hence, by the Realizability Lemma, we obtain an embedding $\Gamma$ of $K_n$ such that
$\TSG(\Gamma) = D_m$, as desired.
In the case where $r$ is odd,
we use the Edge Embedding Lemma to embed the edges of $K_n$;
so we need to verify that the five conditions of the lemma are satisfied.
We will use the following claim to verify one of the conditions.

\medskip
\noindent{\sc Claim:}
For all $g \in H$,
$g(A)$ either equals $A$ or is disjoint from $A$.

{\em Proof of Claim:}
Let $g \in H$ be such that $g(A)$ intersects $A$.
We wish to show $g(A) = A$.
Let $x \in A \cap g(A)$.
Then $x$ is fixed by both $a$ and $gag\inv$.
Now, $agag\inv = h^i$ for some positive $i \le m$
(since, in any dihedral group, the product of two reflections is not a reflection).
So $x$ is fixed by $h^i $.
Now, $h^i = h_1^i h_2^i$ has fixed points
only if $i = m/2$ or $i = m$;
and, in either case, $h^i(A) = A$.
Since $agag\inv = h^i$ and $a\inv = a$, we get $ag\inv = g\inv ah^i$;
so $ag\inv(A) = g\inv ah^i (A) = g\inv(A)$,
i.e.,  $g\inv(A)$ is fixed by $a$.
Therefore $g\inv(A) \subset A$, which implies $g\inv(A) = A$
since $A$ is a circle and $g$ is an isometry.
Therefore $g(A) = A$, completing the proof of the claim. $\Box$

\medskip

Let $w = h^{m/2}(v)$; then $v$ and $w$ are the two vertices on the circle $A$.
Now, any pair of vertices that are pointwise fixed by a nontrivial element of $H$
are of the form $\{g(v),g(w)\}$ for some $g \in H$;
and, for each such pair, $gag\inv$ is the unique nontrivial element of $H$ that fixes the pair pointwise.
So condition~(1) of the Edge Embedding Lemma is satisfied.
Let $A_{vw}$ be one of the two arcs on $A$ from $v$ to $w$.
For each pair of vertices $\{v',w'\} = \{g(v),g(w)\}$,
let $A_{v'w'}=g(A_{vw})$.
Any two such arcs are disjoint since, by the above claim, for all $g \in H$,
$g(A)$ either equals $A$ or is disjoint from $A$.
Hence condition~(2) is satisfied as well.
The only nontrivial elements of $H$ that
fix a point in the interior of $A_{vw}$
or setwise fix the pair of vertices $\{v, w\}$
are $h^{m/2}$ and $a$,
both of which setwise fix $A_{vw}$.
And similarly for other pairs $\{v', w'\}$;
hence condition~(3) is satisfied.
The pair $\{v,w\}$ is interchanged by only one element of $H$, namely $h^{m/2}$;
and this element pointwise fixes the circle $C_1$ and no vertices.
So, by a similar argument for the other pairs $\{v', w'\}$,
we see that conditions~(4) and (5) are also satisfied.  Hence, by the Edge Embedding Lemma, there is an embedding $\Gamma$ of $K_n$ such that $D_m \leq \TSG(\Gamma)$.

\subsection*{\bf Step 4: $m \geq 3$ is odd.}
Now, suppose $m \ge 3$ is odd, 
and $n = mr + k$ for some $r$ and for some $k\in \{0,1,2,3\}$ (where $k \ne 3$ if $m = 3$). 
This time, we let $h$ be a rotation of $S^3$ of order $m$ with fixed point set a circle $C$, and let $a$ be an involution of $S^3$ whose fixed point set $A$ intersects $C$ orthogonally in two points.  
Then $H = \orb{h, a} = D_m$.  
We embed $k$ vertices on $C$ as follows:  if $k = 1$, one vertex is embedded as a point of $C \cap A$; if $k = 2$, one vertex is embedded in $C - A$, and the other is embedded as the image of the first under $a$; if $k = 3$, we embed two vertices as for $k = 2$, and the third as for $k = 1$.  
For the remaining $n-k = mr$ vertices, we proceed as follows.
Embed $\lfloor r/2 \rfloor$ points in the complement of $C \cup A$ such that they have distinct orbits,
giving a total of $2m \lfloor r/2 \rfloor$ points under the action of $H$. 
If $r$ is even, this allows us to embed all of the $mr$ remaining vertices, as desired.
If $r$ is odd, we can embed only $mr -m$ vertices this way. 
The remaining $m$ vertices are embedded as the orbit of a point in $A - C$.
Note that in this case, since $m$ is odd, 
for each circle $A' \subset \orb{A}_H$,
only one vertex is on $A' - C$.

Thus, for part~(2) of the theorem, we have embedded the vertices of $K_n$ as a set $V$ which is invariant under $H$ so that $H$ induces a faithful action on $V$.  We now use the Edge Embedding Lemma to embed the edges of $K_n$.  The fixed point sets of the elements of $H$ consist of $C$ and the circles in $\orb{A}_H$.
Any two of these circles meet at the two points $C \cap A$.  
Since we have placed a vertex at at most one of these points, two elements of $H$ can fix a pair of vertices only if they share the same fixed point set.  So condition (1) of the Edge Embedding Lemma is satisfied.  

Now, if $v$ and $w$ are distinct vertices pointwise fixed by an element of $H$,
then either $\{v,w\} \subset C$ or $\{v,w\} \subset A'$ for some $A' \in \orb{A}_H$.
Since $k \leq 3$, the vertices on $C$ can all be connected 
by arcs in $C - V$ whose interiors are disjoint.  
The vertex in $C\cap A$ (if there is one) 
can be connected to each of the vertices (if any) in $\orb{A-C}_H$  
by arcs in $\orb{A-C}_H$ whose interiors are all disjoint from each other and from the arcs on $C$.  
So condition (2) is satisfied.  And any element of $H$ which fixes a point in the interior of one of these arcs, or interchanges its endpoints, also fixes the arc setwise.  So condition (3) is satisfied.  
The only elements of $H$ which interchange vertices are involutions.  
Each involution pointwise fixes at most two vertices of $K_n$ and the edge between them, 
so condition (4) is satisfied.  
Finally, the fixed point set of each involution is non-empty, 
and is not the fixed point set of any other element of $H$, so condition (5) is satisfied.  
Hence, by the Edge Embedding Lemma, there is an embedding $\Gamma$ of the edges and vertices of $K_n$ such that $D_m \leq \TSG(\Gamma)$.

\subsection*{\bf Step 5: $m = 2$.}
Finally, suppose 
$m = 2$ and $n = 4r+k$ for some $r$, and for some $k \in \{0,1,2\}$.
 We use the same construction as for part~(2),
 with $H = \orb{h, a}$, $C$, and $A$ as before.
 The vertices are embedded as in part~(2),
 i.e., if $k=1$, embed one vertex as a point in $C \cap A$;
 if $k=2$, embed two vertices as a point in $C -A$ and its image under $a$;
 and embed the remaining $4r$ vertices as the orbit under $H$ of $r$ points in the complement of $C \cup A$. 
The conditions of the Edge Embedding Lemma are verified as in the construction for part~(2),
except that they are simpler here since 
$\orb{A}_H = \{A\}$ and there are no vertices on $A - C$.
Hence, again there is an embedding $\Gamma$ of $K_n$ such that $D_m \leq \TSG(\Gamma)$.

\subsection*{\bf Step 6: Conclusion.}
We have shown that for all values of $m$ and $n$ in the hypothesis of the theorem, 
there is an embedding $\Gamma$ of $K_n$ such that $H= D_m \leq \TSG(\Gamma)$. 
In all of our constructions of $\Gamma$,
either (i)~there is a vertex which is not fixed by any non-trivial element of $H$,
or (ii)~there is a vertex which is fixed by an involution in $H$ but not by every element of $H$:
case~(i) occurs in parts~(1) and (2) if $n = mr+k$ and $r \ge 2$, 
and also in part~(3) since $n = 4r+k \ge 7$ and $k \le 2$, which implies $r \ge2$;
case~(ii) occurs in parts~(1) and (2) if $n = mr+k$ and $r = 1$.
Let $G= \Z_m$ or $D_m$.
In case~(i), by the Subgroup Lemma, 
we can re-embed $K_n$ as $\Gamma'$ such that $\TSG(\Gamma')=G$.
In case~(ii), by \cite[Lemma 2.2]{fmn2}, 
$\TSG(\Gamma)$ does not contain $A_4$ and hence is not a polyhedral group. 
So, by the Complete Graph Theorem and the Subgroup Lemma, 
again there is a re-embedding $\Gamma'$ of $K_n$ such that $\TSG(\Gamma') = G$.
\end{proof}

\section{Proof of Theorem \ref{T:ZxZxZ}}\label{ZxZxZ}

Theorem \ref{T:ZxZxZ} characterizes when the topological symmetry group can be a product of cyclic groups.  We begin by defining a {\it standard presentation} for these groups:

\begin{itemize}
	\item $\orb{\rho,\s}$ is a {\em standard presentation} of $\Z_r \x \Z_s$ if $\Z_r \x \Z_s=\orb{\rho,\s}$, where $\o(\rho)=r$, $\o(\s)=s$, and $\rho$ commutes with $\s$.
	\item $\orb{\rho,\s,\phi}$ is a {\em standard presentation} of $(\Z_r \x \Z_s) \rtimes \Z_2$ if $(\Z_r \x \Z_s) \rtimes \Z_2 =\orb{\rho,\s,\phi}$, where $\o(\rho)=r$, $\o(\s)=s$, $\o(\phi)=2$,  $\rho\s = \s\rho$, $\phi\rho = \rho\inv \phi$, and $\phi\s = \s\inv \phi$.
\end{itemize}

\medskip

Lemma \ref{L:n=15} shows that the additional restriction for $(\Z_r \x \Z_s) \rtimes \Z_2$ in part (4) of Theorem \ref{T:ZxZxZ} is necessary.

\begin{lemma} \label{L:n=15}
Assume that $n > 6$ and $K_n$ has an embedding $\Gamma$ such that $(\Z_3 \x \Z_3) \rtimes \Z_2 \leq \TSG(\Gamma)$.  
If $9 \vert (n-6)$, then $18\vert (n-6)$.
\end{lemma}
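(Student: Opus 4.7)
The plan is to determine the possible orbit structures of $G = (\Z_3 \x \Z_3) \rtimes \Z_2$ acting on the vertex set, and then use the fixed-point bounds of the Automorphism Theorem to force a parity constraint. First, by the Isometry Theorem I would realize $G$ as a finite subgroup of $\so(4)$ with standard generators $\orb{\rho,\sigma,\phi}$, where $\rho$ and $\sigma$ are rotations about distinct geodesic circles $C_\rho, C_\sigma \subset S^3$ (distinct because $\rho, \sigma$ together generate the non-cyclic group $\Z_3 \x \Z_3$), hence $|C_\rho \cap C_\sigma| \leq 2$.

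Next I would enumerate the subgroups of $G$: the four $\Z_3$'s $\orb{\rho}, \orb{\sigma}, \orb{\rho\sigma}, \orb{\rho\sigma^2}$ are each normal in $G$; the nine involutions $\{\phi\rho^i\sigma^j\}$ form a single conjugacy class; and the twelve $S_3$-subgroups fall into four conjugacy classes of size $3$, one for each $\Z_3$. Because each $\Z_3$ is normal, a vertex with stabilizer $\orb{x}$ of order $3$ would give an orbit of six vertices all fixed by $x$, violating the $\leq 3$ bound for order-$3$ automorphisms in the Automorphism Theorem. So the only permitted stabilizers are $\{1\}, \Z_2, S_3, \Z_3 \x \Z_3, G$, giving orbit sizes $18, 9, 3, 2, 1$, counted by $a, b, c, d, e$ respectively; thus $n = 18a + 9b + 3c + 2d + e$.

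The geometric constraint $|C_\rho \cap C_\sigma| \leq 2$ forces $2d + e \leq 2$ since all orbits counted by $d$ and $e$ sit in $C_\rho \cap C_\sigma$. For each order-$3$ element $x \in \{\rho,\sigma,\rho\sigma,\rho\sigma^2\}$, letting $c_x$ be the number of $S_3$-orbits whose $S_3$ contains $\orb{x}$, the Automorphism Theorem gives $\fix(x) = 3c_x + 2d + e \leq 3$. Using $9 \mid (n-6)$ I have $3c + 2d + e \equiv 6 \pmod{9}$; a small case analysis over $(d,e) \in \{(0,0),(0,1),(0,2),(1,0)\}$ shows the only integer solution compatible with all constraints is $d = e = 0$ and $c = 2$, so $n = 18a + 9b + 6$.

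To conclude I would count $\fix(\psi)$ for an arbitrary involution $\psi \in G$. In each size-$9$ orbit the nine vertex-stabilizers exhaust the nine $\Z_2$-subgroups, so $\psi$ fixes exactly one vertex per such orbit; in each size-$3$ orbit, a direct calculation shows that of the three conjugate $S_3$-stabilizers $\orb{x,\phi g_j}$ (indexed by coset representatives $g_j$ of $\orb{x}$ in $\Z_3 \x \Z_3$), exactly one contains $\psi$, so $\psi$ fixes exactly one vertex there as well; finally $\psi$ fixes each orbit of size $1$. Thus $\fix(\psi) = b + c + e = b + 2$, and the Automorphism Theorem's bound $\fix(\psi) \leq 2$ forces $b = 0$, giving $n = 18a + 6$ as desired. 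The main obstacle is the last combinatorial step: verifying that every one of the nine involutions lies in exactly one conjugate of any given $S_3$-stabilizer, which must be checked for all four conjugacy classes of $S_3$'s (the cases for $\orb{\rho\sigma}$ and $\orb{\rho\sigma^2}$ being slightly less transparent than for $\orb{\rho}$ and $\orb{\sigma}$).
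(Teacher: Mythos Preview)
Your proof is correct and takes a genuinely different route from the paper's. The paper argues by contradiction: assuming $n$ is odd, it invokes \cite{cfo} to produce two disjoint $3$-element vertex sets $V$, $W$ setwise invariant under both $\rho$ and $\sigma$, and then runs a three-case analysis on how $\rho$, $\sigma$, $\phi$ act on $V \cup W$, each case contradicting the fact that $\phi$ fixes exactly one vertex. Your approach is instead a systematic orbit--stabilizer count: normality of each $\Z_3 \le G$ excludes size-$6$ orbits, the congruence $3c + 2d + e \equiv 6 \pmod 9$ together with the Automorphism Theorem bounds forces $(c,d,e) = (2,0,0)$, and then the uniform count $\fix(\psi) = b + 2$ for every involution $\psi$ kills the size-$9$ orbits. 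Your version is more self-contained (no appeal to \cite{cfo}) and avoids the case split; the paper's version is more concrete about the two invariant triples, which connects naturally to the sufficiency constructions later in the paper.

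One cosmetic point: you assert up front that $\rho$ and $\sigma$ are rotations about geodesic circles $C_\rho$, $C_\sigma$, but an order-$3$ element of $\so(4)$ can be a fixed-point-free glide rotation. This does no damage to your argument, since you only use $|C_\rho \cap C_\sigma| \le 2$ to bound $2d + e$, and if $d$ or $e$ is positive then some vertex is fixed by both $\rho$ and $\sigma$, forcing $\fix(\rho)$ and $\fix(\sigma)$ to be nonempty circles anyway (distinct by your non-cyclicity remark). The ``obstacle'' you flag at the end is also routine: for any of the four subgroups $\langle x \rangle$, the three $S_3$'s containing it are $\langle x, \phi g \rangle$ as $g$ ranges over coset representatives of $\langle x \rangle$ in $\Z_3 \times \Z_3$, and their involution sets $\{\phi h : h \in g\langle x \rangle\}$ partition the nine involutions of $G$, uniformly in the choice of $x$.
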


\begin{proof}
Assume $9 \vert (n-6)$, but $n$ is odd.  By the Isometry Theorem, we may assume that $(\Z_3 \x \Z_3) \rtimes \Z_2$ is generated by a finite group of orientation-preserving isometries of $(S^3, \Gamma)$.  Let $\orb{\rho, \s, \f}$ be a standard presentation of $(\Z_3 \x \Z_3) \rtimes \Z_2$.  Observe that, since $n$ is odd, $\f$ must fix an odd number of vertices; by the Automorphism Theorem, this means $\f$ fixes exactly one vertex.  Also, it follows from \cite{cfo} that $\rho$ and $\s$ each fix either 0 or 3 vertices, $\rho(\fix(\s))=\fix(\s)$, $\s(\fix(\rho)) = \fix(\rho)$, and the fixed point sets are disjoint.  It also follows from \cite{cfo} that there are at most two disjoint sets of three vertices which are setwise invariant under both $\rho$ and $\s$ (including the fixed vertices).  Let $X$ be the union of all such sets; then $\vert X \vert = 0, 3$ or $6$, and includes the fixed vertices of $\rho$ and $\s$.  We prove below, as in \cite{cfo}, that $\vert X \vert = 6$.  Let $\GammaÕ$ be the result of removing the vertices of $X$ (and all adjacent edges) from the graph $\Gamma$.  Then, by the Automorphism Theorem, $\rho$ partitions the vertices of $\GammaÕ$ into 3-cycles, as does $\s$.  But $\s$ does not fix any of the cycles of $\rho$ (since we've removed the vertices of $X$), so it permutes the 3-cycles of $\rho$
(since $\rho$ and $\s$ commute, $\s$ takes each cycle of $\rho$ to some cycle of $\rho$). 
Hence the number of 3-cycles must be a multiple of 3, so 9 divides the number of vertices of $\Gamma'$, which is $n - \vert X\vert$.  Since $9\vert (n-6)$, this means $\vert X\vert = 6$, so there are exactly two disjoint sets, $V = \{v_0, v_1, v_2\}$ and $W = \{w_0, w_1, w_2\}$, which are setwise invariant under both $\rho$ and $\s$.

Now there are three possibilities (up to renaming $\rho$, $\s$, $V$ and $W$):  \begin{enumerate}
	\item $\rho$ fixes $V$ pointwise and permutes $W$, and $\s$ fixes $W$ pointwise and permutes $V$, or
	\item $\rho$ fixes $V$ pointwise and permutes $W$, and $\s$ permutes both $V$ and $W$, or
	\item both $\rho$ and $\s$ permute both $V$ and $W$.
\end{enumerate}

\subsection*{\bf Case 1:}  Assume $\rho$ fixes $V$ pointwise and permutes $W$, and $\s$ fixes $W$ pointwise and permutes $V$.  Then $\rho\f(v_i) = \f\rho\inv(v_i) = \f(v_i)$, so $\f(v_i)$ is a fixed vertex of $\rho$, and hence $\f(v_i) \in V$.  So $\f$ fixes $V$ setwise.  Since $\f$ has order 2 and $\vert V\vert = 3$, $\f$ must fix an odd number of vertices in $V$.  So, by the Automorphism Theorem, $\f$ fixes exactly one vertex of $V$.  Similarly, using $\s$, $\f$ fixes exactly one vertex of $W$.  But then $\f$ fixes two vertices, which contradicts our earlier observation that it must fix exactly one vertex.

\subsection*{\bf Case 2:}  Assume $\rho$ fixes $V$ pointwise and permutes $W$, and $\s$ permutes both $V$ and $W$.  As in Case 1, $\f$ must fix $V$ setwise, and fixes one vertex of $V$.  Let $W' = \f(W)$, with $w_i' = \f(w_i)$.  Then $\rho(w_i') = \rho\f(w_i) = \f\rho\inv(w_i) = \f(w_j) = w_j'$, so $\rho$ fixes $W'$ setwise.  Similarly, $\s$ fixes $W'$ setwise.  But $V$ and $W$ are the only sets of three vertices which are setwise invariant under both $\rho$ and $\s$.  $W' \neq V$, since $\f(W') = W$ and $\f(V) = V$.  So we must have $W' = W$, and hence $\f$ fixes $W$ setwise.  But then we get a contradiction as in Case 1.

\subsection*{\bf Case 3:}  Assume both $\rho$ and $\s$ permute both $V$ and $W$.  As in Case 2, $\f(V)$ is either $V$ or $W$, and $\f(W)$ is either $V$ or $W$.  So either $\f(V) = V$ and $\f(W) = W$, or $\f(V) = W$ and $\f(W) = V$.  The first case leads to a contradiction as in Case 1, so we may assume that $\f(V) = W$ and $\f(W) = V$.  Without loss of generality, say $\f(v_0) = w_0$.

Without loss of generality (replacing $\s$ by $\s\inv$ if needed), we may assume that $\rho$ and $\s$ have the same action on $V$.  But then $\rho \neq \s$ on $W$ (or $\rho\s\inv$ would fix 6 vertices, and hence be trivial by Smith Theory), and neither $\rho$ nor $\s$ fix $W$ pointwise, so $\rho = \s\inv$ on $W$ since $\rho$ and $\s$ both have order~3.  
So, without loss of generality, $\rho(v_i) = \s(v_i) = v_{i+1}$, $\rho(w_i) = w_{i+1}$ and $\s(w_i) = w_{i-1}$, where the indices are calculated modulo 3.

Then $\f(v_1) = \f\rho(v_0) = \rho\inv\f(v_0) = \rho\inv(w_0) = w_2$.  But, also, $\f(v_1) = \f\s(v_0) = \s\inv\f(v_0) = \s\inv(w_0) = w_1$.  This is a contradiction.

\subsection*{} So every case leads to a contradiction, and we conclude that $n$ must be even, so $18 \vert (n-6)$.
\end{proof}

\bigskip

\noindent We can now prove Theorem \ref{T:ZxZxZ}.

\begin{proof}
Since $\gcd(r,s) > 1$, we can rewrite $\Z_r \x \Z_s$ as $\Z_{\gcd(r,s)} \x \Z_{\lcm(r,s)}$, where the first factor is not trivial.  Also, since $r$ and $s$ are both odd, $\gcd(r,s)$ and $\lcm(r,s)$ are also odd.  Therefore, by redefining $r$ and $s$ if necessary, we may assume $r \vert s$.  

\subsection*{\bf Result for $\Z_r \x \Z_s$.}  The result for $\Z_r \x \Z_s$ follows from \cite{cfo} (including part (3)).  

\subsection*{\bf Necessity for $(\Z_r \x \Z_s) \rtimes \Z_2$.}  Since $\Z_r \x \Z_s$ is a subgroup of $(\Z_r \x \Z_s) \rtimes \Z_2$, the necessity of most of the conditions for $(\Z_r \x \Z_s) \rtimes \Z_2$ follows; the condition in part (4) is required by Lemma \ref{L:n=15}.

\subsection*{\bf Sufficiency for $(\Z_r \x \Z_s) \rtimes \Z_2$ when $rs \vert n$.}
We consider the subgroup of SO(4) isomorphic to $(\Z_r \x \Z_s) \rtimes \Z_2$ generated as follows.  Let $\a$ be a rotation of order $r$ about a geodesic circle $A$, $\b$ be a rotation of order $s$ about a geodesic circle $B$ disjoint from $A$ which is setwise fixed by $\a$, and $\f$ be a rotation of order 2 about a geodesic circle $C$ which intersects each of $A$ and $B$ in two points.  Then $G = \orb{\a,\b,\f} \cong (\Z_r \x \Z_s) \rtimes \Z_2$.  

Suppose $rs \vert n$.  If $n = (2k)rs$ for some positive integer $k$, we pick $k$ points in $S^3$ which are not fixed by any non-trivial elements of $G$, and whose orbits under $G$ are disjoint.  Then each orbit has $2rs$ elements, and we embed the vertices of $K_n$ as the resulting $2rsk$ points.  Since none of the vertices is fixed by any nontrivial element of $G$, the hypotheses of the Realizability Lemma are satisfied.

Now consider the case when $n = (2k+1)rs$.  Embed $(2k)rs$ vertices as described above.  For the remaining vertices, pick a point $v$ on $C - (A\cup B)$, so $\f(v) = v$; then the orbit of $v$ under $G$ contains $rs$ points.  Embed the remaining vertices as these points.  
Since $n$ is odd, by the Automorphism Theorem, 
any involution fixes only one vertex.
So $v$ is the only vertex in $\orb{v}_G$ which lies on $C$.  Therefore, at most one vertex of $\orb{v}_G$ lies on the fixed point set of any nontrivial element of $G$, so the first four conditions of the Edge Embedding Lemma are trivially satisfied.  
Since $G$ has no even-order elements other than involutions,
pairs of vertices are only interchanged by involutions, which each have distinct fixed point sets homeomorphic to $S^1$, 
so condition (5) is satisfied.  

So, if $rs \vert n$, we have an embedding $\Gamma$ of $K_n$ such that $(\Z_r \x \Z_s) \rtimes \Z_2 \leq \TSG(\Gamma)$.

\subsection*{\bf Sufficiency for $(\Z_r \x \Z_s) \rtimes \Z_2$ when $rs \vert (n-3)$.}  Suppose that $n = krs + 3$ and $\gcd(r,s) = 3$.  Since we assume that $r\vert s$, this means $r = 3$.  
We embed $krs$ vertices as in the previous paragraphs (for $k$ even or odd).  
Now let $w$ be one point of $B \cap C$, so $w$ is fixed by both $\b$ and $\f$ (if $k$ is odd, then we choose $w$ to be the point of $B \cap C$ which is in the same component of $C - A$ as the vertex $v$ in the last paragraph).  Then the orbit of $w$ under $G$ contains 3 points, all of which are in $B$; we embed our last three vertices as these three points.  Each involution of $G$ fixes one of these three points and interchanges the other two.  At most one vertex lies in the intersection of any two fixed point sets of elements of $G$, so condition (1) of the Edge Embedding Lemma is satisfied.  Each pair of vertices fixed by an involution are joined by a unique arc in the fixed point set of the involution whose interior is disjoint from the other vertices and fixed point sets; and the three vertices on $B$ are joined by disjoint arcs in $B$, so condition (2) is satisfied.  The three arcs on $B$ are fixed by $\b$ and permuted cyclically by $\a$.  Each involution of $G$ fixes one of the vertices on $B$ and interchanges the other two, so it interchanges two of the three arcs, and fixes the third arc setwise (while reversing its endpoints).  Therefore, the arcs on $B$ satisfy condition (3); the arcs in the fixed point sets of the involutions are disjoint from the other vertices and fixed points sets, so they also satisfy condition (3).  Only involutions interchange pairs of vertices, and each involution fixes at most two vertices of the graph pointwise, so condition (4) is satisfied.  Finally, the fixed point sets of the involutions are all circles, and are distinct from the fixed point sets of all other elements of the group, so condition (5) is satisfied.  So we have an embedding $\Gamma$ of $K_n$ such that $(\Z_r \x \Z_s) \rtimes \Z_2 \leq \TSG(\Gamma)$.

\subsection*{\bf Sufficiency for $(\Z_3 \x \Z_3) \rtimes \Z_2$ when $18 \vert (n-6)$}
Finally, we consider when $n = 18k+6$ and $r = s= 3$, which is part~(4) of the theorem.  We embed the vertices as above, except now there are three vertices on each of $A$ and $B$,
and there is no $v$ as above since $2k$ is even.
Then, the conditions of the Edge Embedding Lemma are satisfied as above, and we again have an embedding $\Gamma$ of $K_n$ such that $(\Z_3 \x \Z_3) \rtimes \Z_2 \leq \TSG(\Gamma)$.

\subsection*{\bf Result for $(\Z_r \x \Z_s) \rtimes \Z_2$.}
So in each case we have an embedding $\Gamma$ of $K_n$ such that $(\Z_r \x \Z_s) \rtimes \Z_2 \leq \TSG(\Gamma)$.  Since $(\Z_r \x \Z_s) \rtimes \Z_2$ is not a subgroup of $\so(3)$, the Complete Graph Theorem tells us that $\TSG(\Gamma)$ is a subgroup of $D_m \times D_m$ for some odd $m$.  Then, by the Subgroup Lemma, there is an embedding $\Gamma'$ of $K_n$ such that $\TSG(\Gamma') = (\Z_r \x \Z_s) \rtimes \Z_2$.
\end{proof}

\section{Proof of Theorem \ref{T:DxD}} \label{S:necessary}

Theorem \ref{T:DxD} characterizes when the topological symmetry group can be a product of a cyclic and dihedral group, or a product of two dihedral groups.  As in the last section, we begin with the definition of a {\bf standard presentation} of these products: \begin{itemize}
	\item $\orb{\rho,\s,\b}$ is a {\em standard presentation} of $\Z_r \x D_s$ if $\Z_r \x D_s=\orb{\rho,\s,\b}$, where $\o(\rho)=r$, $\o(\s)=s$, $\o(\b)=2$,  $\rho$ commutes with $\b$ and $\s$, and $\b\s = \s\inv \b$.
	\item $\orb{\rho,\a,\s,\b}$ is a {\em standard presentation}  of $D_r \x D_s$ if $D_r \x D_s=\orb{\rho,\a,\s,\b}$, where $\o(\rho)=r$, $\o(\s)=s$, $\o(\a)=\o(\b)=2$, each of $\rho$ and $\a$ commutes with $\s$ and $\b$, $\a \rho = \rho\inv \a$, and $\b \s = \s\inv \b $.
\end{itemize}

\medskip

We first prove some necessary conditions for realizing $\Z_r \times D_s$ and $D_r \times D_s$.  These conditions will depend on some elementary facts about isometries of $S^3$.

\begin{lemma} \label{fact-PreserveFixedPointSet}
Let $\t$ and $\b$ be bijective maps from a set to itself such that $\b \t = \t^{\pm 1} \b$. Then $\b (\fix(\t)) = \fix(\t)$.
\end{lemma}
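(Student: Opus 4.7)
The plan is to verify the containment $\beta(\fix(\tau)) \subseteq \fix(\tau)$ directly from the commutation relation, and then obtain the reverse containment by applying the same argument to $\beta^{-1}$.

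For the first step, I would take an arbitrary $x \in \fix(\tau)$ and show that $\beta(x) \in \fix(\tau)$. Writing the hypothesis as $\beta\tau = \tau^{\varepsilon}\beta$ with $\varepsilon \in \{+1,-1\}$, I would evaluate both sides at $x$. Since $\tau(x) = x$, the left side gives $\beta\tau(x) = \beta(x)$, while the right side gives $\tau^{\varepsilon}(\beta(x))$. Therefore $\tau^{\varepsilon}(\beta(x)) = \beta(x)$, which says $\beta(x) \in \fix(\tau^{\varepsilon}) = \fix(\tau)$, since a map and its inverse share the same fixed-point set. This yields $\beta(\fix(\tau)) \subseteq \fix(\tau)$.

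For the reverse containment, I would note that conjugating $\beta\tau = \tau^{\varepsilon}\beta$ by $\beta^{-1}$ on the right gives $\beta\tau\beta^{-1} = \tau^{\varepsilon}$, and hence $\tau\beta^{-1} = \beta^{-1}\tau^{\varepsilon}$; equivalently, $\beta^{-1}$ satisfies $\beta^{-1}\tau^{\varepsilon} = \tau\beta^{-1}$, which (since $\varepsilon^2 = 1$) is again a relation of the form $\beta^{-1}\tau = \tau^{\pm 1}\beta^{-1}$. Applying the first step with $\beta^{-1}$ in place of $\beta$ yields $\beta^{-1}(\fix(\tau)) \subseteq \fix(\tau)$, and applying $\beta$ to both sides gives $\fix(\tau) \subseteq \beta(\fix(\tau))$. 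Combining the two inclusions gives the desired equality $\beta(\fix(\tau)) = \fix(\tau)$.

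There is really no substantive obstacle here; the only subtlety worth flagging is that bijectivity of $\beta$ alone does not immediately upgrade the inclusion $\beta(\fix(\tau)) \subseteq \fix(\tau)$ to equality when $\fix(\tau)$ may be infinite, which is why the $\beta^{-1}$ argument is needed. The computation $\tau^{\varepsilon}(\beta(x)) = \beta(x)$ is the entire content of the lemma.
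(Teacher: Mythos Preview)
Your proof is correct and follows essentially the same approach as the paper: establish $\beta(\fix(\tau)) \subseteq \fix(\tau)$ by direct computation, then obtain the reverse inclusion by applying the same argument to $\beta^{-1}$. The paper simply says ``by a similar argument'' for the $\beta^{-1}$ step, whereas you spell out why $\beta^{-1}$ satisfies the same commutation hypothesis; your remark about why bijectivity alone does not suffice for infinite $\fix(\tau)$ is a nice clarification.
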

\begin{proof}
If $z \in \fix(\t)$, then $\b(z) = \b\t(z) = \t^{\pm 1}\b(z)$.  Therefore, $\b(z) \in \fix(\t)$, so $\b(\fix(\t)) \subseteq \fix(\t)$.  
By a similar argument, $\b\inv(\fix(\t)) \subseteq \fix(\t)$.  Since $\b$ is a bijection, this means $\fix(\t) \subseteq \b(\fix(\t))$, and therefore $\b (\fix(\t)) = \fix(\t)$.
\end{proof}

\begin{lemma}  \label{fact-PreserveOrientation}
Let $G=\langle \t,\b\rangle$ be a finite subgroup of $\mathrm{SO}(4)$ which leaves a simple closed curve $C \subset S^3$ setwise invariant.  Suppose $\o(\t)>2$. Then $\t$ and $\b$ commute if and only if $\b$ preserves the orientation of $C$.
\end{lemma}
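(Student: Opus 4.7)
The plan is to use $\o(\tau) > 2$ to reduce both implications to the special case where $C$ is a geodesic circle in $S^3$, at which point the stabilizer of $C$ in $\mathrm{SO}(4)$ admits an explicit description that finishes the argument by a direct computation. The key preliminary step is to show that $\tau$ itself preserves the orientation of $C$. If not, then $\tau|_C$ is a finite-order orientation-reversing isometry of $C$, hence a reflection with exactly two fixed points $p, q \in C$. Since $\o(\tau) > 2$, $\tau$ is a nontrivial element of $\mathrm{SO}(4)$, and the only possible nonempty fixed point set of such an element on $S^3$ is a geodesic circle $\tilde C$; in particular $\tilde C$ contains both $p$ and $q$, and $\tau$ is a simple rotation by some angle $\theta$ in the $2$-plane perpendicular to $\tilde C$. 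The tangent direction to $C$ at $p$ must then be an eigenvector of $\tau|_{T_p S^3}$ with eigenvalue $-1$, forcing $\theta = \pi$ and $\o(\tau) = 2$, a contradiction.

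For the direction $(\Leftarrow)$, suppose $\beta$ preserves the orientation of $C$. Together with the preliminary step, both $\tau|_C$ and $\beta|_C$ lie in the abelian rotation group $\mathrm{SO}(2)$ of $C$, so the commutator $[\tau,\beta]$ pointwise fixes $C$. If $[\tau,\beta] \ne 1$, then by the same fixed-set dichotomy $C$ must itself be a geodesic circle. Choose coordinates with $S^3 \subset \mathbb{C}^2$ and $C = \{(z,0) : |z|=1\}$; the stabilizer of $C$ in $\mathrm{SO}(4)$ then equals $T^2 \cup T^2\sigma$, where $T^2 = \{(z,w) \mapsto (e^{ia}z, e^{ib}w)\}$ is the abelian subgroup preserving the orientation of $C$ and the coset $T^2\sigma = \{(z,w) \mapsto (e^{ia}\bar z, e^{ib}\bar w)\}$ reverses it. Both $\tau$ and $\beta$ lie in the abelian $T^2$, so in fact they commute, contradicting $[\tau,\beta] \ne 1$; hence $[\tau,\beta] = 1$ from the start.

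For the direction $(\Rightarrow)$, I argue the contrapositive. Assume $\beta$ reverses the orientation of $C$, so $\beta|_C$ is a reflection, and suppose $\tau\beta = \beta\tau$. Then $\tau|_C$ commutes in $\mathrm{O}(2)$ with a reflection, forcing $\tau|_C$ to have order at most $2$. If $\tau|_C = 1$ then $C \subseteq \fix(\tau)$; if $\tau|_C$ has order $2$, then $C \subseteq \fix(\tau^2)$ with $\tau^2 \ne 1$. Either way $C$ is the pointwise fixed set of a nontrivial element of $\mathrm{SO}(4)$, so $C$ is a geodesic circle. Passing to the coordinate model above, $\tau \in T^2$ and $\beta \in T^2\sigma$, and a direct calculation of $\tau\beta$ and $\beta\tau$ shows that equality forces both rotation angles of $\tau$ to lie in $\{0,\pi\}$, giving $\o(\tau) \le 2$, again a contradiction.

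The main obstacle is that $C$ is only assumed to be a simple closed curve, not a priori a geodesic. Both implications hinge on using $\o(\tau) > 2$ to produce a nontrivial element of $\mathrm{SO}(4)$ that pointwise fixes $C$, thereby forcing $C$ to be a great circle; after this reduction, the explicit structure of the stabilizer of a standard geodesic circle in $\mathrm{SO}(4)$ reduces everything to a one-line calculation.
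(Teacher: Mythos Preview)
Your argument is correct and takes a genuinely different route from the paper's.  The paper works locally: it produces a $G$-invariant product tubular neighborhood $C \times D^2$, factors each element as a pair $(\text{circle map}, \text{disk map})$, and then reduces both directions to three elementary observations about finite cyclic/dihedral actions on a circle or a disk (an orientation-reversing element has order $2$; orientation-preserving elements commute; anything commuting with an element of order $>2$ is orientation-preserving).  Your argument is global: in each direction you manufacture a nontrivial element of $G$ that \emph{pointwise} fixes $C$ (namely $[\tau,\beta]$ in one direction, and $\tau$ or $\tau^2$ in the other), conclude that $C$ must be a geodesic circle, and then finish by an explicit $T^2 \cup T^2\sigma$ computation inside $\mathrm{SO}(4)$.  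The paper's approach has the virtue of never needing $C$ to be a round circle; yours has the virtue of making the obstruction completely transparent once $C$ is standard.

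One minor remark on your preliminary step.  Your tangent-vector argument (that $T_pC$ is a $(-1)$-eigenline of $d\tau_p$, forcing the rotation angle to be $\pi$) tacitly assumes $C$ is smooth at $p$.  The paper's proof also implicitly assumes enough smoothness to speak of a normal bundle and parallel transport, so this is not a discrepancy; but you can sidestep the issue entirely.  If $\tau|_C$ were orientation-reversing then $(\tau|_C)^2 = \mathrm{id}_C$, so $\tau^2$ pointwise fixes $C$; since $\o(\tau) > 2$ we have $\tau^2 \ne 1$, hence $C = \fix(\tau^2)$ is a geodesic circle, and then your own $T^2\sigma$ computation shows any orientation-reversing element of the stabilizer squares to the identity, contradicting $\o(\tau) > 2$.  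This replaces the differential step with the same reduce-to-geodesic trick you use elsewhere.
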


\begin{proof}  Since $G$ leaves $C$ setwise invariant, there is a neighborhood of $C$ which $G$ leaves setwise invariant and the action of $G$ on this neighborhood is conjugate to the action on the normal bundle of $C$.  Now since the normal bundle of a simple closed curve has a canonical trivialization by parallel transport, there is an invariant product neighborhood $C\times D^2$ whose product structure is preserved by $G$. Thus we can write the restrictions $\t|_{C\times D^2}$ and $\beta|_{C\times D^2}$ as product maps $(\t_1,\t_2)$ and $(\b_1,\b_2)$ respectively, where $\t_1$ and $\b_1$ are maps of $C$ and $\t_2$ and $\b_2$ are maps of $D^2$.

We make the following observations about the elements of any finite group action of a circle or disk:  
\begin{enumerate}
	\item If an element is orientation reversing, then it has order 2.  
	\item Any pair of orientation preserving elements commute.
	\item Suppose that $g$ is an element with order greater than 2.  Then any element which commutes with $g$ is orientation preserving.
\end{enumerate}

Since $\o(\t)>2$, for some $k = 1$ or 2, $\o(\t_k)>2$.  Hence by Observation 1, $\t_k$ must be orientation preserving.  Since $\t$ is orientation preserving, it follows that both $\t_1$ and $\t_2$ must be orientation preserving.

Suppose that $\b$ preserves the orientation of $C$, then so does $\b_1$.  
Since $\b$ is orientation preserving, it follows that $\b_2$ is also orientation preserving.  By Observation 2, $\t_j$ and $\b_j$ commute for each $j$.  Now $\t\b\t^{-1}\b^{-1}$ has finite order and pointwise fixes $C\times D^2$, so it must be trivial.
Hence $\t$ and $\b$ commute.

Conversely, suppose that $\t$ and $\b$ commute. 
Then for each $j$, $\t_j$ and $\b_j$ commute.  Recall that $k$ was chosen so that $\o(\t_k)>2$.  Since $\t_k$ and $\b_k$ commute, by Observation 3, $\b_k$ is orientation preserving.
Again since $\b$ is  orientation preserving, this means that both $\b_1$ and $\b_2$ are as well.  In particular, $\b$ preserves the orientation of $C$.
\end{proof}

\begin{lemma} \label{fact-TwoCircles}
Any nontrivial, orientation preserving, odd order rotation of $S^3$ setwise fixes exactly two geodesic circles.
\end{lemma}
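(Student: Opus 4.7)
The plan is to put $g$ into canonical form in $\so(4)$ and then classify the $g$-invariant $2$-planes through the origin in $\R^4$, since these correspond bijectively to the setwise-fixed geodesic circles of $S^3$. Throughout this paper, a ``rotation'' of $S^3$ denotes an orientation-preserving isometry of $S^3$ that pointwise fixes a geodesic circle (see, for instance, the construction at the start of Step~3 in the proof of Theorem~\ref{T:Dm}), so I interpret the hypothesis in this sense. Choosing coordinates on $\R^4 = \C^2$ so that the fixed circle is $C_1 = P_1 \cap S^3$ with $P_1 = \{z_2 = 0\}$, the map $g$ can be written as $(z_1, z_2) \mapsto (z_1, \lambda z_2)$, where $\lambda = e^{2\pi i k/n}$, $\gcd(k,n) = 1$, and $n = \ord(g)$ is odd and greater than $1$.

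To produce a second setwise-fixed geodesic circle, I observe that the orthogonal plane $P_2 = \{z_1 = 0\}$ satisfies $g(P_2) = P_2$, so $C_2 = P_2 \cap S^3$ is also setwise fixed. Since $P_1 \cap P_2 = \{0\}$, the circles $C_1$ and $C_2$ are disjoint and hence distinct, giving at least two setwise-fixed geodesic circles.

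For uniqueness, let $C$ be any geodesic circle with $g(C) = C$, and write $C = P \cap S^3$ for the unique $2$-plane $P$ through the origin in $\R^4$; then $g(P) = P$ and $g|_P \in O(2)$ has odd order dividing $n$. In particular $g|_P$ cannot be a reflection, so it is either trivial or a nontrivial rotation. In the first case $P \subseteq \fix(g) = P_1$, and since both are $2$-dimensional, $P = P_1$. In the second case $g|_P$ is a rotation by some $\theta \neq 0$, so $e^{\pm i\theta}$ appear among the eigenvalues of $g$ on the complexification of $\R^4$, which are $\{1, 1, \lambda, \bar\lambda\}$; hence $\theta = \pm\beta$ where $\beta = 2\pi k/n$. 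The $\C$-eigenspaces of $g$ for $\lambda$ and $\bar\lambda$ are each one-dimensional and together span the complexification of $P_2$, which forces $P = P_2$. Thus $C \in \{C_1, C_2\}$.

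The main obstacle is simply getting the hypothesis right: the statement would fail for arbitrary elements of $\so(4)$, since, for example, a Clifford translation setwise fixes every Hopf fiber, and an element of $\so(4)$ with rotation angles $\alpha = -\beta \neq 0$ setwise fixes a $1$-parameter family of geodesic circles. Once one recognizes that the word ``rotation'' in the paper restricts $g$ to a simple element of $\so(4)$ with a pointwise-fixed geodesic circle, the rest is a routine eigenvalue computation.
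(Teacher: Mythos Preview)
Your proof is correct and shares the paper's overall strategy: conjugate $g$ into the block-diagonal form $I_2 \oplus R_\beta$, then show that the only $g$-invariant $2$-planes through the origin are the two coordinate planes. The difference lies in how the uniqueness step is carried out. The paper argues directly: given a hypothetical third invariant plane $P''$, it projects a vector $v \in P''$ onto the rotated plane $P$, observes that $u = r(v) - v$ is a nonzero vector lying in the (necessarily one-dimensional) intersection $P \cap P''$, and concludes that $r(u)$ is parallel to $u$, forcing the rotation angle to be a multiple of $\pi$ --- impossible for odd order. Your argument instead complexifies and reads off the result from the eigenspace decomposition: since the eigenvalues $\lambda$ and $\bar\lambda$ each have one-dimensional eigenspaces in $\C^4$, any real invariant plane on which $g$ acts nontrivially must complexify to exactly their span, hence equals $P_2$. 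Your route is cleaner conceptually and generalizes more readily to higher dimensions; the paper's route is more elementary in that it avoids complexification entirely. Your closing remark about why the hypothesis ``rotation'' (pointwise-fixed circle) is essential --- ruling out Clifford translations and elements with equal-magnitude rotation angles --- is a helpful clarification that the paper leaves implicit.
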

\begin{proof}
Let $s$ be an orientation preserving, odd order rotation of  of $S^3$;
then $s  = s'|_{S^3}$ for some odd order element $s' \in \so(4)$.
Hence $s'$ is conjugate to a rotation $r$ that fixes 
the $x_1 x_2$-plane ($P$) setwise, 
and the $x_3 x_4$-plane ($P'$) pointwise. 
Since $r$ and $s'$ are conjugate,
it suffices to show that $r$ does not setwise fix any geodesic circle
other than the two in $P$ and $P'$.
Every geodesic circle of $S^3$ is 
the intersection of $S^3$ with a plane through the origin in $\R^4$. 
Suppose $r$ setwise fixes a geodesic $P'' \cap S^3$
distinct from $P \cap S^3$ and $P' \cap S^3$.
As $r$ is a linear transformation of $\R^4$, 
it must also setwise fix the plane $P''$.
Since $P$ and $P'$ are orthogonal complements of each other
and $P''$ is distinct from $P'$, 
$P''$ does not lie in the orthogonal complement of $P$;
hence $P''$ contains a vector $v$ whose projection onto $P$ is nonzero. 
So we can write $v = w + w'$, 
where $w$ and $w'$ are the projections of $v$ onto  $P$ and  $ P'$ respectively, 
and $w$ is nonzero. 
Now, $r(v) \in r(P'') = P''$, and 
$P''$ is a subspace, so $r(v) - v \in P''$. 
Furthermore, $r(w') = w'$ since $r$ pointwise fixes $P'$.
So $u = r(v) - v = r(w) - w$ is a nonzero vector in $P$ 
since $w \ne 0$ and $r$ rotates $w$ by a nonzero angle. 
Now, as $u$ lies in both $P$ and $P''$, 
$r(u)$ also lies in both $P$ and $P''$.
Hence $r(u)$ is parallel to $u$; 
so $r$ must rotate $u$ by a multiple of $\pi$, 
which contradicts the fact that $s$ has odd order. 
So $r$ does not setwise fix any geodesic circle other than 
$P \cap S^3$ and $P' \cap S^3$.
\end{proof}
\bigskip

We use these lemmas to prove the following result about subgroups of $\so(4)$ isomorphic to $\Z_r \times D_s$.

\begin{lemma} \label{lemma-ZrxDs-isometries}
Let $ \langle \rho,\s,\b \rangle $ be  a standard presentation of 
$\Z_r \x D_s \le \mathrm{SO}(4)$, where $r, s \ge 3$ and $r$ is odd.
Then for every positive $i<\o(\rho)$ and positive $j<\o(\s)$, $\rho^i $ and $\s^j$ are fixed point free.
\end{lemma}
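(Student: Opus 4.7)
My plan is to argue by contradiction in both parts, using the centralizer structure of finite order isometries in $\so(4)$. Throughout, I use the standard fact that any non-identity element of $\so(4)$ with non-empty fixed point set in $S^3$ fixes a geodesic circle.

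For the $\rho^i$ assertion, suppose $\rho^i$ has a fixed point for some $0<i<r$. I diagonalize $\rho$ in $\so(4)$ as $\mathrm{diag}(R_\alpha,R_\beta)$ acting on orthogonal $2$-planes $V_1,V_2$, where $R_\alpha, R_\beta$ are rotations of orders $a$ and $b$ with $\lcm(a,b)=r$. Without loss of generality $\fix(\rho^i)=V_1\cap S^3$, so $a\mid i$ and $b\nmid i$, forcing $a\ne b$; in particular $R_\alpha$ and $R_\beta$ are not conjugate in $O(2)$. It follows that the four complex eigenvalues of $\rho$ split into two distinct conjugate pairs on $V_1$ and $V_2$, and a routine computation then shows that the centralizer of $\rho$ in $\so(4)$ is $\so(V_1)\times\so(V_2)$. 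Since both $\sigma$ and $\beta$ commute with $\rho$, each lies in this centralizer and restricts to a rotation of each $V_i$. Rotations of a $2$-plane commute, so $\beta|_{V_i}$ and $\sigma|_{V_i}$ commute; together with $\beta\sigma\beta^{-1}=\sigma^{-1}$ this yields $\sigma|_{V_i}=\sigma|_{V_i}^{-1}$, making $\sigma|_{V_i}$ of order at most $2$ on each $V_i$. Consequently $\sigma$ itself has order at most $2$, contradicting $s\ge 3$.

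For the $\sigma^j$ assertion, suppose $\sigma^j$ has a fixed point for some $0<j<s$, and let $D=\fix(\sigma^j)$. From $\rho\sigma^j=\sigma^j\rho$ and $\beta\sigma^j=\sigma^{-j}\beta$, Lemma \ref{fact-PreserveFixedPointSet} yields $\rho(D)=D$ and $\beta(D)=D$. Diagonalize $\sigma$ as $\mathrm{diag}(R_c,R_d)$ on orthogonal $2$-planes $W_1,W_2$ with $D=W_1\cap S^3$; then $c\mid j$, $d\nmid j$, and hence $c\ne d$. The same centralizer computation as above gives that the centralizer of $\sigma$ in $\so(4)$ is $\so(W_1)\times\so(W_2)$, so $\rho$ preserves each $W_i$ and restricts to a rotation there. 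Having already proved $\rho$ is fixed-point free, both $\rho|_{W_1}$ and $\rho|_{W_2}$ are non-trivial rotations of odd order, hence of order at least $3$. Since $\lcm(c,d)=s\ge 3$, at least one of $R_c, R_d$ has order at least $3$; let $W$ be the corresponding plane. Then $\beta\sigma\beta^{-1}=\sigma^{-1}$ forces $\beta$ to interchange the two complex-conjugate eigenspaces of $\sigma|_W$, so $\beta|_W$ is a reflection of $W$---in particular orientation-reversing on $W\cap S^3$. But $\rho$ and $\beta$ commute and $\rho|_W$ has order $\ge 3$, so Lemma \ref{fact-PreserveOrientation} asserts that $\beta$ preserves orientation of $W\cap S^3$, a contradiction.

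The main obstacle is the centralizer computation: in both parts one must verify that the centralizer in $\so(4)$ of the relevant diagonal rotation is exactly a product of two $\so(2)$ factors. This depends on the two planar rotations not being conjugate in $O(2)$, which follows from the divisibility conditions $a\mid i$, $b\nmid i$ (resp.\ $c\mid j$, $d\nmid j$) forcing $a\ne b$ (resp.\ $c\ne d$). A few small-order edge cases---such as $a=1$ in the first part, or $d=2$ in the second---have to be verified individually but fit the same template.
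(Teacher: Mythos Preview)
Your argument is correct, but the paper takes a shorter, more geometric route that avoids the centralizer computation entirely. The paper treats $\sigma^j$ first: if $C=\fix(\sigma^j)$ is nonempty, Lemma~\ref{fact-PreserveFixedPointSet} gives $\beta(C)=C$, and Lemma~\ref{fact-PreserveOrientation} (applied with $\tau=\sigma$, order $s\ge 3$) shows $\beta$ reverses the orientation of $C$, so $\fix(\beta)$ is a circle $B$ meeting $C$ in exactly two points. Since $\rho$ commutes with both $\beta$ and $\sigma^j$, it setwise fixes $B$ and $C$, hence the two-point set $B\cap C$; having odd order, $\rho$ fixes each of these points and therefore pointwise fixes both circles, forcing $\rho$ to be trivial---a contradiction. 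The $\rho^i$ case is then dispatched in two lines: $\sigma$ and $\beta$ both commute with $\rho^i$ (whose order is odd and at least $3$), so by the two lemmas they both preserve $\fix(\rho^i)$ and its orientation; but applying Lemma~\ref{fact-PreserveOrientation} to the pair $(\sigma,\beta)$ on this circle forces $\beta$ to \emph{reverse} orientation, since $\beta\sigma=\sigma^{-1}\beta$ and $s\ge 3$.

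Your diagonalization approach trades geometry for linear algebra: once you know $a\ne b$ (resp.\ $c\ne d$), the centralizer collapses to an abelian $\so(2)\times\so(2)$, which makes the contradiction with $\beta\sigma\beta^{-1}=\sigma^{-1}$ immediate in the $\rho^i$ case. The paper's approach packages all of this into the orientation lemma and never needs to identify the centralizer or handle the small-order edge cases you flag. Two minor remarks: your invocation of Lemma~\ref{fact-PreserveFixedPointSet} at the start of the $\sigma^j$ argument is vestigial, since after diagonalizing you work directly with the planes $W_i$ rather than with $D$; and in the last step you could bypass Lemma~\ref{fact-PreserveOrientation} altogether by observing directly in $O(2)$ that a reflection cannot commute with a rotation of order $\ge 3$.
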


\begin{proof} 
Suppose toward contradiction that
$\fix(\s^j)$ is nonempty for some positive $j < s$.
Then $\fix(\s^j)$ is a circle $C$.
Since $\b \s = \s \inv \b$,
by Lemma~\ref{fact-PreserveFixedPointSet}, $\b(C) = C$.
Furthermore, since $s \ge 3$, by Lemma~\ref{fact-PreserveOrientation}, 
$\b$ must reverse the orientation of $C$.
So $\b$ fixes exactly two points in $C$ and is a rotation by $\pi$
about a circle $B$ that intersects $C$
in those two points.

Since $\rho$ commutes with both $\b$ and $\s^j$,
by Lemma~\ref{fact-PreserveFixedPointSet} it setwise fixes each of $B$ and $C$.
Hence it setwise fixes $B \cap C$, 
which consists of two points.
As $\rho$ has odd order,
it cannot exchange these two points,
so it must fix each of them.
Now, for each circle $B$ and $C$,
since $\rho$ setwise fixes the circle
and fixes two points on it
and has odd order, 
it must pointwise fix the circle, so $\rho$ fixes $B \cup C$.
This implies $\rho$ is trivial,
which is a contradiction.

Now, suppose toward contradiction that
$\fix(\rho^j) $ is nonempty for some positive $j < r$.
Then $\fix(\rho^j)$ is a circle.
Since $\b$ and $\s$ each commute with $\rho^j$,
by Facts~1 and 2
they each setwise fix and preserve the orientation of $\fix(\rho^j)$.
But, by Lemma~\ref{fact-PreserveOrientation},  this contradicts the hypothesis that $\b \s = \s\inv \b$.
\end{proof}
\bigskip

We now apply these results to prove restrictions on the complete graphs which can be embedded so that their topological symmetry group contains $\Z_r \times D_s$ or $D_r \times D_s$.

\begin{lemma} \label{lemma-ZrxDs-nonrealizable}
Let  $n > 6$ and let $r,s\geq 3$ be odd.  Suppose $\Gamma$ is an embedding of $K_n$
such that $ \Z_r \x D_s$ is induced on $\Gamma$ by an isomorphic subgroup $G\leq \mathrm{SO}(4)$. 
\begin{enumerate}
\item
If some non-trivial element of  $G$ fixes a vertex of $\Gamma$, then $r=s=3$ and $2rs \vert (n-6)$. Otherwise, $2rs\vert n$.
\item
If $\orb{\rho, \s, \b}$ is a  standard presentation  of $G$,
then there exist geodesic circles $A$ and $B$ in $S^3$
such that: 
(i)~$G(A \cup B) = A \cup B$,
(ii)~$\b$ interchanges $A$ with $B$,
and 
(iii)~if a non-trivial element of  $G$ fixes a vertex of $\Gamma$, 
then each of $A$ and $B$ contains exactly three vertices of $\G$.
\item
If $H = \orb{\rho, \a, \s, \b} \le \so(4)$ is a standard presentation of $D_r \x D_s$ 
and induces an isomorphic subgroup on $\G$,
then,
in addition to the conclusions of part (2) above, we have
 $H(A \cup B) = A \cup B$
and $\a$ interchanges $A$ with $B$.
\end{enumerate}
\end{lemma}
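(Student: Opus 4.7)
The plan is to first locate the pair of circles $A, B$ geometrically using Lemmas \ref{fact-PreserveFixedPointSet}--\ref{lemma-ZrxDs-isometries}, and then to extract the divisibility conditions by an orbit count using the Automorphism Theorem.

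For the geometric part, Lemma \ref{lemma-ZrxDs-isometries} shows that $\rho$ and $\s$ are fixed-point free of odd order, so by Lemma \ref{fact-TwoCircles} there are exactly two geodesic circles $A, B$ setwise fixed by $\s$. Since $\rho$ commutes with $\s$ and has odd order, $\rho$ permutes $\{A,B\}$ and so setwise fixes each of $A, B$. The relation $\b\s = \s^{-1}\b$ means $\b$ normalizes $\langle\s\rangle$, so $\b$ also permutes the setwise-fixed circles of $\s$, i.e., $\b(\{A, B\}) = \{A, B\}$. Suppose, toward a contradiction, that $\b$ fixes each of $A, B$ setwise. By Lemma \ref{fact-PreserveOrientation} (applied to $\s, \b$ with $\o(\s)\ge 3$), $\b$ must reverse the orientation of $A$, and so has exactly two fixed points on $A$. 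Thus $\fix(\b)$ is a non-empty geodesic circle $C$, distinct from both $A$ and $B$ (since $\b$ pointwise fixes only two points of each). But $\rho(C) = \fix(\rho\b\rho^{-1}) = \fix(\b) = C$, contradicting Lemma \ref{fact-TwoCircles}. Hence $\b$ interchanges $A$ and $B$, proving parts (2)(i) and (2)(ii). The same argument with $\a$ in place of $\b$ and $\rho$ in place of $\s$ (using $\a\rho=\rho^{-1}\a$ and $\o(\rho)\ge 3$) proves (3).

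For the arithmetic part, let $T = \langle\rho,\s\rangle$ and let $V_A, V_B$ denote the vertices on $A, B$. Note that $A \cap B = \emptyset$, since these circles arise as the intersections of $S^3$ with two orthogonal 2-planes of the maximal torus containing $\rho$ and $\s$. For a vertex $v$ off $A\cup B$, no non-trivial element of $T$ fixes $v$ (each such element has fixed set contained in $A\cup B$), and no element $\rho^i\s^j\b$ with $i\ne 0$ fixes any vertex (it has even order greater than $2$, forbidden by the Automorphism Theorem). So the stabilizer of $v$ is either trivial or of the form $\{1, \s^j\b\}$. In the latter case the orbit of $v$ has size $rs$; and since $s$ is odd, all $s$ involutions $\s^k\b$ are $T$-conjugate, so by symmetry each must fix exactly $r$ vertices in that orbit, violating the Automorphism Theorem's bound of $2$ fixed vertices per involution (since $r\ge 3$). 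Hence every off-$(A\cup B)$ orbit has size $2rs$. For $v\in V_A$, the stabilizer is $T_A$ (the pointwise stabilizer of $A$ in $T$), because each $\s^j\b$ sends $A$ to the disjoint circle $B$. Non-trivial elements of $T_A$ fix all of $V_A$ and have odd order, so the Automorphism Theorem gives $|V_A|\le 3$; since $V_A$ is a union of $T$-orbits of common size $d_A = [T:T_A] > 1$, odd, and at most $|V_A| \le 3$, we conclude $d_A = 3$ and $|V_A| = 3$. Symmetrically $|V_B| = 3$, proving (2)(iii).

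Finally, $|T_A| = |T_B| = rs/3$, while $T_A \cap T_B = \{1\}$ and $T_A T_B \subseteq T$, so $r^2s^2/9 \le rs$, which forces $r = s = 3$. Hence if some non-trivial element of $G$ fixes a vertex, then $n = 6 + 2rs\cdot k$ for some $k\ge 0$, yielding $2rs \vert (n-6)$; otherwise every orbit has size $2rs$, yielding $2rs \vert n$. This completes (1). The main obstacle will be the off-$(A\cup B)$ analysis: ruling out stabilizers of the form $\{1, \s^j\b\}$ requires carefully tracking the conjugacy class of the involutions in $G$ and matching the forced $r$ fixed vertices per involution against the Automorphism Theorem's tight bound of $2$.
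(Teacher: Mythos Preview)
Your orbit-counting argument in the arithmetic part is nice and, once the circles $A$ and $B$ are in hand, it works. The problem is in how you obtain $A$ and $B$. You write that since $\rho$ and $\sigma$ are fixed-point free (by Lemma~\ref{lemma-ZrxDs-isometries}), Lemma~\ref{fact-TwoCircles} yields exactly two geodesic circles setwise fixed by $\sigma$; and later you derive a contradiction from $\rho$ setwise fixing three circles $A$, $B$, $C$, again citing Lemma~\ref{fact-TwoCircles}. But Lemma~\ref{fact-TwoCircles} is stated and proved only for \emph{rotations}, i.e., odd-order isometries with a fixed circle; its proof begins by conjugating the element to one that pointwise fixes a coordinate plane. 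A fixed-point-free odd-order element of $\so(4)$ need not have only two invariant geodesics: if its two rotation angles are equal or opposite, it setwise fixes every fiber of a Hopf (or anti-Hopf) fibration. In fact, in the paper's own realization of $D_r\times D_s$, the generator $\sigma$ is the glide rotation $(z,w)\mapsto(e^{-2\pi i/s}z,\,e^{2\pi i/s}w)$, which setwise fixes every anti-Hopf fiber; and $\rho$ setwise fixes every Hopf fiber. So neither invocation of Lemma~\ref{fact-TwoCircles} is valid, and the conclusion you draw from it is false in the standard example.

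The paper sidesteps this by importing structural results from \cite{fnt}: $G$ preserves a standard Hopf fibration, and there is a distinguished pair of fibers $\{A,B\}$ that is $G$-invariant. From there the paper shows $\beta$ must swap $A$ and $B$ via Lemma~\ref{fact-PreserveOrientation} (your Lemma~\ref{fact-PreserveOrientation} step, applied with $\rho$ and then with $\sigma$ on the same circle, is exactly how the paper gets the contradiction when $\beta(A)=A$). For part~(1), the paper first proves directly that no involution can fix a vertex---if $\gamma$ fixed $v_0$, then $\rho(\fix(\gamma))=\fix(\gamma)$, so $\langle v_0\rangle_\rho\subset\fix(\gamma)$ has $r\ge 3$ points, contradicting the Automorphism Theorem---and then, when a noninvolution $g_0=\rho^i\sigma^j$ fixes a vertex, applies Lemma~\ref{fact-TwoCircles} to $g_0$ (which \emph{is} a rotation) to force $\fix(g_0)\in\{A,B\}$. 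Your counting argument for the $r=s=3$ conclusion (via $|T_A||T_B|\le|T|$) is a pleasant alternative to the paper's route, but it only becomes available once $A$ and $B$ are correctly produced.
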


\begin{proof} 
By  \cite[Lemma~2]{fnt},
$G$ satisfies the Involution Condition (defined in \cite{fnt}).
Therefore, by \cite[Proposition~3]{fnt}, 
$G$ preserves a standard Hopf fibration of $S^3$,
since otherwise it would be isomorphic to one of the polyhedral groups.
And by \cite[Lemma~3]{fnt}, 
there exist two fibers $A$ and $B$
such that $\{A,B\}$ is setwise invariant under $G$.
Let $\orb{\rho, \s, \b}$ be a  standard presentation  of $G$.
Since $\rho$ and $\s$ each have odd order,
they cannot interchange $A$ with $B$.
Therefore they must setwise fix each of $A$ and $B$.

Suppose $\b$ fixes $A$ setwise.
Then, since $\b$ commutes with $\rho$,
by Lemma~\ref{fact-PreserveOrientation},
$\b$ must preserve the orientation of $A$.
On the other hand,
$\b \s = \s\inv \b$, and $s \ge 3$;
so by Lemma~\ref{fact-PreserveOrientation},
$\b$ must reverse the orientation of $A$.
Thus we get a contradiction.
Since $\b(A \cup B) = A \cup B$ and $\b(A) \ne A$,
$\b$ must interchange $A$ with $B$.

We claim no involution in $G$ can fix any vertices.
Suppose toward contradiction that an involution $\g$ fixes a vertex $v_0$.
Then $\fix(\g)$ is a circle $X$.
By hypothesis, $\rho$ commutes with every element of $G$,
and in particular with $\g$.
Hence, by Lemma~\ref{fact-PreserveFixedPointSet},
$\rho(X) = X$,
which gives $\langle v_0 \rangle_{\rho} \subset X$.
By Lemma~\ref {lemma-ZrxDs-isometries}, for every positive $i < r$, $\rho^i$ has no fixed points, so $|\langle v_0 \rangle_{\rho}| = r \ge 3$.
But, by the Automorphism Theorem, $\g$ cannot fix more than two vertices, which is a contradiction.

If no vertex of $\G$ is fixed by any nontrivial element of $G$,
then every vertex orbit under $G$ has size $|G|=2rs$, 
and $2rs \vert n$.
Suppose a vertex $v_0$ is fixed by some nontrivial element $g_0 \in G$
that is not an involution.
Since $\fix(g_0)$ is nonempty, it is a circle $C$.
By replacing $g_0$ by its square if necessary,
we can assume $g_0 =  \rho^i \s^j$ for some $i,j$.
So $g_0$ commutes with both $\rho$ and $\s$.
Hence, by Lemma~\ref{fact-PreserveFixedPointSet}, $\orb{\rho, \s}$ setwise fixes $C$.
By Lemma~\ref {lemma-ZrxDs-isometries}, for every positive $k<r$ and $l<s$,
$\rho^k$ and $\s^l$ have no fixed points;
so $\orb{v_0}_{\orb{\rho, \s}} \subset C$
contains at least three vertices.
On the other hand, as $g_0$ is nontrivial, 
by the Automorphism Theorem
its fixed point set $C$ cannot contain more than 3 vertices.
It follows that $\orb{v_0}_{\orb{\rho, \s}} $
consists of exactly 3 vertices.
Thus, $\rho^3$ and $\s^3$ must each fix $v_0$.
Therefore, by Lemma~\ref{lemma-ZrxDs-isometries}, $r=s=3$.

Now, $g_0= \rho^i \s^j$ is a rotation of odd order about $C$.
So, by Lemma~\ref{fact-TwoCircles},
it setwise fixes at most two geodesic circles.
As $g_0 $ setwise fixes each of $A$ and $B$,
we must have $C=A$ or $C = B$.
Thus $v_0$ is in $A$ or $B$.
Recall that the orbit of $v_0$ under $\orb{\rho,\s}$
contains exactly three vertices;
and $\b$ interchanges $A$ with $B$.
It follows that $|\langle v_0 \rangle_G | = 6$.
Furthermore, these 6 vertices are the only vertices
that are fixed by any nontrivial element of $G$
since any other such vertex 
would also have an orbit of size 6
with 3 on $A$ and 3 on $B$,
contradicting that $C$ contains only 3 vertices. 
Thus every vertex that's not in $\orb{v_0}_G$
is fixed only by the trivial element 
and hence has an orbit of size $|G| = 2rs$.
Therefore $2rs \vert (n - 6)$, which finishes part~(1) of the lemma.
The above also establishes part~(2) of the lemma.

To prove part~(3),
suppose $\orb{\rho, \a, \s, \b}$
is a standard presentation of $D_r \x D_s$
and is induced by an isomorphic subgroup $H \le \mathrm{SO}(4)$ on $\G$.
But applying the argument given above for $\orb{\rho, \s, \b}$
to $\orb{\rho, \a, \s}$ instead,
we see that $\a$ also interchanges $A$ with $B$,
and hence $H(A \cup B) = A \cup B$.
\end{proof}
\bigskip

Finally, we prove restrictions on when the topological symmetry group can be $D_3 \times D_3$.

\begin{lemma} \label{lemma-DrDsNonrealiz}
Assume that $K_n$, with $n > 6$, has an embedding $\Gamma$ such that $D_3 \times D_3 \leq \TSG(\Gamma)$.  
If $18 \vert (n-6)$, then $36\vert (n-6)$.
\end{lemma}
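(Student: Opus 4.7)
My plan is to use the structural conclusions of Lemma~\ref{lemma-ZrxDs-nonrealizable} applied to the subgroup $\Z_3\times D_3$ inside $D_3\times D_3$ to locate six vertices on two invariant geodesic circles, and then to upgrade the divisibility from $18$ to $36$ by showing that no non-trivial element of the full group $D_3\times D_3$ fixes any vertex off those circles. Once that is established, each orbit of $H = D_3\times D_3$ off the circles has full size $|H|=36$, and $n = 6 + 36k$ forces $36\mid(n-6)$.

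\textbf{Setup.} By the Isometry Theorem I may assume that $D_3\times D_3$ is induced on $\Gamma$ by an isomorphic subgroup $H=\langle\rho,\alpha,\sigma,\beta\rangle\le\so(4)$ in standard presentation. I apply Lemma~\ref{lemma-ZrxDs-nonrealizable} to the subgroup $\Z_3\times D_3=\langle\rho,\sigma,\beta\rangle\le H$: since $18\mid(n-6)$ but $18\nmid 6$, the alternative $18\mid n$ is ruled out, so some non-trivial element of $\langle\rho,\sigma,\beta\rangle$ must fix a vertex. That lemma then produces geodesic circles $A,B$, each containing exactly three vertices of $\G$, with $\beta$ interchanging $A$ and $B$; part~(3) of that lemma says the same of $\a$, and $H(A\cup B)=A\cup B$.

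\textbf{Ruling out fixed vertices off $A\cup B$.} The non-identity elements of $H$ split into three families, each of which I show fixes no vertex outside $A\cup B$. First, each non-identity element of $\langle\rho,\sigma\rangle$ has odd order $3$ and setwise fixes both $A$ and $B$; by Lemma~\ref{fact-TwoCircles} its fixed point set must equal $A$, $B$, or be empty, so it fixes no vertex off $A\cup B$. Second, each involution $\alpha\rho^i$ commutes with $\sigma$; if it fixed a vertex $v$ off $A\cup B$, then $v,\sigma v,\sigma^2 v$ would be three distinct fixed vertices (using that $\sigma^j$ is fixed-point free by Lemma~\ref{lemma-ZrxDs-isometries} applied to $\Z_3\times D_3$), contradicting the Automorphism Theorem's bound of two fixed vertices for an involution. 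Third, involutions $\beta\sigma^j$ are handled symmetrically using $\rho$.

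\textbf{The mixed involutions $\alpha\rho^i\beta\sigma^j$.} These setwise fix each of $A$ and $B$. I claim each fixes exactly one vertex on $A$ and one on $B$, saturating the Automorphism Theorem bound of two fixed vertices and leaving no fixed vertex off $A\cup B$. The subgroup $H_A=\langle\rho,\sigma,\alpha\beta\rangle\cong(\Z_3\times\Z_3)\rtimes\Z_2$ setwise fixes $A$; its action on the three vertices of $A$ factors through a quotient $D_3$ acting as $S_3$, whose order-$3$ kernel consists of whichever of $\langle\rho\sigma^{-1}\rangle$ or $\langle\rho\sigma\rangle$ pointwise fixes $A$. Each mixed involution $\alpha\rho^i\beta\sigma^j$ lies in the reflection coset of this $D_3$, so it acts as a transposition on the three vertices of $A$ and fixes exactly one of them; an identical argument on $B$ gives exactly one fixed vertex there.

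\textbf{Expected obstacle.} The delicate point is verifying that every mixed involution $\alpha\rho^i\beta\sigma^j$ really projects to a non-trivial reflection, rather than into the kernel, of the $D_3$-action on each of $A$ and $B$. I plan to handle this via a direct coordinate computation of the actions of $\rho,\sigma,\alpha,\beta$ on the three labelled vertices of each circle, exploiting the fact that $\rho\ne\sigma$ forces $\rho$ and $\sigma$ to act as inverse order-$3$ rotations on one of the two circles, so that neither $\rho\sigma$ nor $\rho\sigma^{-1}$ can pointwise fix both $A$ and $B$; this in turn ensures the kernel of the $H_A$-action on each circle contains only pure powers of $\rho\sigma^{\pm 1}$ and never a mixed involution.
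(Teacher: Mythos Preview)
Your overall strategy is sound and your case analysis for the odd-order elements of $\langle\rho,\sigma\rangle$ and for the involutions $\a\rho^i$, $\b\s^j$ is correct. However, your ``Expected obstacle'' for the mixed involutions $\a\rho^i\b\s^j$ is a phantom, and the $D_3$-quotient machinery and planned coordinate computation are unnecessary. All you need is the parity observation: each such involution setwise fixes each of $A$ and $B$ (since $\a$ and $\b$ each interchange them while $\rho,\s$ preserve them), each circle carries exactly three vertices, and an order-$2$ permutation of three points fixes an odd number of them---hence at least one on $A$ and at least one on $B$, hence at least two in total, hence exactly two by the Automorphism Theorem, leaving none off $A\cup B$. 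Equivalently, in your language: if a mixed involution lay in the kernel of the action on the three vertices of $A$, it would already fix three vertices, contradicting the Automorphism Theorem; so the kernel contains no involutions, without any appeal to coordinates.

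The paper's proof reaches the same endpoint by a different organization. It argues by contradiction (assuming $n-6$ is an odd multiple of $18$), removes the six vertices on $A\cup B$ to obtain $\G'=K_{n-6}$, and then re-applies Lemma~\ref{lemma-ZrxDs-nonrealizable}(1) to each of the two $\Z_3\times D_3$ subgroups $\langle\rho,\s,\b\rangle$ and $\langle\rho,\a,\s\rangle$ acting on $\G'$ (where now $18\mid n'$) to conclude in one stroke that no element of either subgroup fixes a vertex of $\G'$. This leaves only the mixed involutions, and the paper reduces to a single one, $\f=\a\b$, by checking that $(\rho^{2i}\s^{2j})\,\a\b\,(\rho^{2i}\s^{2j})^{-1}=\rho^i\s^j\a\b$, so all mixed involutions are conjugate. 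It then finishes with exactly the parity argument above. Your direct, element-by-element approach and the paper's contradiction-plus-conjugacy approach are both valid; the paper's buys a shorter case analysis at the cost of re-invoking Lemma~\ref{lemma-ZrxDs-nonrealizable} on the truncated graph, while yours is more self-contained once you replace the coordinate plan with the two-line parity argument.
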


\begin{proof}
Suppose toward contradiction that there is an embedding $\G$ of $K_n$
such that  $\TSG(\Gamma) =D_3 \x D_3$, and $n-6$ is an odd multiple of $18$.
Since $n > 6$, $K_n$ is 3-connected;
so $\TSG(\G)$ is
induced by an isomorphic subgroup $H$ of $\so(4)$.
Let $H = \orb{\rho, \a, \s, \b}$ be a standard presentation of 
$D_3 \x D_3$.

By Lemma~\ref{lemma-ZrxDs-nonrealizable},
there exist geodesic circles $A$ and $B$
such that $H(A \cup B) = A \cup B$
and each of these two circles contains exactly three vertices of $\G$.
Thus these six vertices are setwise invariant under $H$.
Let $\G'$ be the embedding of $K_{n'}$, where $n' = n-6$, 
obtained by removing these six vertices from $\G$.
Then $\G'$ is invariant under $H$, 
and hence under its subgroup  $\Z_r \x D_s = \orb{\rho,\s,\b}$.
Since  $18 \vert n'$, by Lemma~\ref{lemma-ZrxDs-nonrealizable}, 
no element of $\orb{\rho,\s,\b}$ 
fixes any vertex of $\G'$.
Similarly, no element of $\orb{\rho,\a,\s}$ 
fixes any vertex of $\G'$.
It follows that only elements of $H -  (\orb{\rho,\s,\b} \cup \orb{\rho,\a,\s})$
can possibly fix any vertex of $\G'$.

Now, some vertex in $\G'$ must be fixed by some nontrivial element in $H$,
since otherwise the orbit size of every vertex would be a multiple of  $|H|=36$,
contradicting that $n'$ is an odd multiple of $18$.
Let $\f = \a \b$.
Then every element of  $H -  (\orb{\rho,\s,\b} \cup \orb{\rho,\a,\s})$
is of the form $\rho^i \s^j \a \b$ for some $i$ and $j$.
Note that $(\rho^{2i}\s^{2j})\a\b(\rho^{2i}\s^{2j})\inv = \rho^{4i}\s^{4j}\a\b = \rho^i \s^j \a \b$.
So every element of  $H -  (\orb{\rho,\s,\b} \cup \orb{\rho,\a,\s})$
is an involution conjugate to $\f = \a\b$.
Thus, some element conjugate to $\f$ fixes a vertex of $\G'$.
Therefore $\f$ itself fixes some vertex $v \in \G'$.

By Lemma~\ref{lemma-ZrxDs-nonrealizable},
$\a$ and $\b$ each interchange $A$ with $B$.
It follows that $\f$ setwise fixes each of $A$ and $B$.
Since each of $A$ and $B$ contains three of 
the vertices of $\G - \G'$ and $\o(\f)=2$,
$\f$ must fix one vertex on each of $A$ and $B$.
These two vertices are distinct from $v$
since the latter is in $\G'$ while the former are in $\G - \G'$.
Therefore $\f$ fixes three vertices of $\G$, 
which is a contradiction since by the Automorphism Theorem 
no involution has three fixed points.
\end{proof}

With these lemmas in hand, we prove Theorem \ref{T:DxD}.

\begin{proof}
We first observe that the necessity of most of the conditions is given by Lemma \ref{lemma-ZrxDs-nonrealizable}; the last condition is required by Lemma \ref{lemma-DrDsNonrealiz}.  It remains to prove the conditions are sufficient.

\subsection*{\bf A realization of $D_r \x D_s$.}
Let $A$ and $B$ be the intersections of $S^3$ with 
the $x_1 x_2$- and the $x_3 x_4$-planes, respectively, in $\R^4$.
Let $\rho$ be a glide rotation obtained by composing
a rotation of $2\pi /r$ about $A$ with a rotation of $2\pi /r$ about $B$,
$\s$ a glide rotation obtained by composing a rotation of $2\pi /s$ about $A$ with 
a rotation of $-2\pi /s$ about $B$,
$T$ the geodesic torus that separates $A$ and $B$,
$\a$ a rotation by $\pi$ about a $(1,-1)$ curve on $T$, 
and 
$\b$ a rotation by $\pi$ about a $(1,1)$ curve on $T$.
Then $\o(\rho)=r$, $\o(\s)=s$, $\o(\a)=\o(\b)=2$, 
$\a$ and $\b$ each interchange $A$ with $B$,
each of $\rho$ and $\a$ commutes with each of $\s$ and $\b$,
$\a$ anticommutes with $\rho$, 
and $\b$ anti-commutes with $\s$.
Thus $G=\orb{ \rho,\a,\s,\b}$ is isomorphic to $D_r \x D_s$.

\subsection*{\bf Sufficiency when $2rs \vert n$.}  If $2rs \vert n$, then
$n = k(4rs)$ or $n = k(4rs) + 2rs$ for some $k \ge 0$.
In the former case,
we pick $k$ points $x_1, \cdots, x_k$ 
disjoint from the fixed point set of all nontrivial elements of $G$
such that any two such points have disjoint orbits under $G$.
We embed the vertices of $K_n$ as the points in the orbits of $x_1, \cdots, x_k$.
Since no vertex is fixed by any nontrivial element of $G$,
by the Realizability Lemma, 
we get an embedding $\Gamma$ of $K_n$ such that
$\TSG(\Gamma) = D_r \x D_s$.
We will refer to this set of $k(4rs)$ embedded vertices as $X$.

Now suppose $n = k(4rs) + 2rs$.
Let $\f = \a \b$.
Then $\f(A)=A$, $\f(B)=B$,
and $\f$ is a rotation by $\pi$ about a geodesic circle $C$
that intersects each of $A$ and $B$ in two points.
Thus $C - (A \cup B)$ consists of four arcs, $C_1, C_2, C_3, C_4$.
From our construction we can see that
the circle $\fix(\a)$ intersects exactly two of these arcs,
say $C_1$ and $C_3$.
In fact, $\a$ setwise fixes each of $C_1$ and $C_3$ 
while reversing their orientations,
and it interchanges $C_2$ with $C_4$.

Let $v \in C_1$ be a point
that is not on the fixed point set of any nontrivial element of $G$ other than $\f$.
Then $v$ is fixed only by $\f$ and the identity;
hence  $|\orb{v}_G| = |G|/ 2= 2rs$.
We embed $2rs$ vertices of $K_n$ as the orbit of $v$,
and the remaining $k(4rs)$ vertices as $X$.

Observe that every element of $G$ of even order
is an involution conjugate to $\a$, $\b$, or $\f$.
Since no vertex is embedded in $\fix(\a)$ or $\fix(\b)$,
no element conjugate to $\a$ or $\b$ fixes any vertices.
Since no vertex is embedded in $A$ or $B$,
no element of odd order fixes any vertices.
Hence only elements conjugate to $\f$ can fix vertices.

We see as follows that $\f$ fixes exactly two vertices.
Every vertex fixed by $\f$ must by in $\orb{v}_G $
since the $k(4rs)$ vertices not in the orbit of $v$ are not fixed by any nontrivial elements.
Now, $\b|_C =(\a \f) |_C = \a |_C$; hence $\a(v) = \b(v)$.
Since the only nontrivial elements of $G$
that take $v$ to a point in $C$ are $\f$, $\a$, and $\b$,
we see that $\orb{v}_G \cap C$ consists of exactly two points,
$v$ and $w=\a(v)$.
It follows that each element conjugate to $\f$ also fixes exactly two vertices.

We now verify that the conditions of the Edge Embedding Lemma are satisfied.
Condition~(1) is satisfied since only elements conjugate to $\f$ fix any vertices 
and no two such elements have the same fixed point set.
Recall that  $\a(C_1)=C_1$, 
and we chose $v$ to be in $C_1$.
Hence $w$ is also in $C_1$.
To satisfy Condition~(2),
let $A_{vw}$ be the arc in $C_1$ from $v$ to $w$.
Each involution $ g \f g\inv$ conjugate to $\f$ fixes exactly two vertices,
$v' = g(v)$ and $w' = g(w)$;
we let $A_{v'w'} = g(A_{vw})$.
Then the interior of any such arc $A_{v'w'}$ is disjoint from the set of all embedded vertices
as well as from any other such arc.
Condition~(3) is satisfied since
the only elements that fix a point in the interior of $A_{v'w'}$
or setwise fix $\{v', w'\}$ are $g \f g\inv$, $g \a g\inv$, and $g \b g\inv$,
which all setwise fix $A_{v'w'}$.
The pair $\{v', w'\}$ is interchanged only by $g \a g\inv$ and $g \b g\inv$,
which do not fix any vertices;
this implies Condition~(4) is satisfied.
Finally, since $g \a g\inv$ and $g \b g\inv$
have nonempty, distinct fixed point sets, 
namely $g(\fix(\a))$ and $g(\fix(\b))$,
Condition~(5) is also satisfied.

So, if $2rs \vert n$, we have an embedding $\Gamma$ of $K_n$ such that $D_r \x D_s \leq \TSG(\Gamma)$.

\subsection*{\bf Sufficiency for $D_3 \x D_3$ when $36 \vert (n-6)$.}  Suppose $n = 6 + 36k$.
Let $u$ be a point on $A$.
Then $\orb{u}_G$ consists of six points,
three on $A$, three on $B$.
We embed six vertices of $K_n$ as $\orb{u}_G$,
and the remaining $36k$ vertices as $X$.

We again verify that the conditions of the Edge Embedding Lemma are satisfied.
We need to do this only for pairs of fixed or interchanged vertices in $\orb{u}_G$
since we already verified the conditions for pairs in $X$
and there are no pairs of fixed on interchanged vertices
one of which is in $\orb{u}_G$ and the other in $X$.
Let $H$ denote the set of all elements of the form $\rho^i \s^j$, 
where $i, j \in \{1,2\}$.
There are exactly four such elements;
they are rotations by $\pm 2\pi/3$ about either $A$ or $B$,
and hence each fix exactly three vertices.
Let $u', u''$ be a pair of vertices in the orbit of $u$.
If they are pointwise fixed by distinct nontrivial elements $h_1, h_2 \in G$,
then we must have $u'$ and  $u''$ both in $A$ or both  in $B$,
$h_1^2= h_2 \in H$, and $\fix(h_1)=\fix(h_2)$.
Thus Condition~(1) is satisfied.
To satisfy Condition~(2), 
we let $A_{u' u''}$ be the arc in $A - \orb{u}_G$ or $B - \orb{u}_G$
whose boundary is $\{u', u''\}$.
Then the interior of any such arc is disjoint from the set of all embedded vertices
as well as from any other such arc.
Condition~(3) is satisfied since only elements in $H$ and elements conjugate to $\f$
fix a point in the interior of $A_{u' u''}$ or setwise fix a pair $\{u', u''\}$ bounding $A_{u' u''}$,
and all such elements take $A_{u' u''}$ to itself.
A pair $\{u', u''\} \subset \orb{u}_G$ is interchanged only by elements conjugate to $\a$, $\b$, or $\f$,
and any such element fixes at most one vertex.
Hence Condition~(4) is satisfied.
Also, any such element has nonempty fixed point set,
and any two such elements have distinct fixed point sets.
Hence Condition~(5) is satisfied.

So, if $36\vert (n-6)$,
then there exists an embedding $\Gamma$ of $K_n$
such that $D_3 \x D_3 \leq \TSG(\Gamma)$.

\subsection*{\bf Sufficiency for $\Z_3 \x D_3$ when $18 \vert (n-6)$.} Suppose $n = 18k + 6$, where $k$ is odd.
Let $G'=\orb{\rho,\s,\b} =\Z_3 \x D_3 \le G$.
Observe that $\orb{u}_{G'} = \orb{u}_G$.
We embed six vertices of $K_n$ as $\orb{u}_{G'} $,
and the remaining $18k$ vertices as the orbit under $G'$
of $k$ points $x_1, \cdots, x_k$ 
disjoint from the fixed point sets of all nontrivial elements of $G'$.
Then the conditions of the Edge Embedding Lemma
are satisfied by a similar argument as above  (but simpler since $\f \not \in G'$).  So there exists an embedding $\Gamma$ of $K_n$ such that $\Z_3 \x D_3 \leq \TSG(\Gamma)$.  

\subsection*{\bf Conclusion.}
So in each case we have an embedding $\Gamma$ of $K_n$ such that $\Z_r \x D_s \leq \TSG(\Gamma)$.  Since $\Z_r \x D_s$ is not a subgroup of $\so(3)$, the Complete Graph Theorem tells us that $\TSG(\Gamma)$ is a subgroup of $D_m \times D_m$ for some odd $m$.  Then, by the Subgroup Lemma, there is an embedding $\Gamma'$ of $K_n$ such that $\TSG(\Gamma') = \Z_r \x D_s$ (and, except in the case when $r =s = 3$ and $n$ is an odd multiple of $18$, an embedding $\Gamma''$ of $K_n$ such that $\TSG(\Gamma') = D_r \x D_s$).
\end{proof}

\small

\normalsize


\begin{thebibliography}{10}

\bibitem{cfo} D. Chambers, E. Flapan and J. O'Brien, Topological symmetry groups of $K_{4r+3}$, {\it Discrete and Continuous Dynamical Systems}, vol. 4, 2011, pp. 1401-1411.

\bibitem{ch}  D. Chambers and E. Flapan, \textit{Topological symmetry groups of small complete graphs}, preprint, arXiv:1212.5964.

\bibitem{fl2} E. Flapan:  Rigidity of Graph Symmetries in the 3-Sphere, {\it Journal of Knot Theory and its Ramifications}, vol. 4, no. 3, 1994, pp. 373-388

\bibitem{fmn1} E. Flapan, B. Mellor and R. Naimi,  Spatial graphs with local knots, {\it Rev. Mat. Comp.}, vol. 25, 2012, pp. 493--510.

\bibitem{fmn2} E. Flapan, B. Mellor and R. Naimi, Complete graphs whose topological symmetry groups are polyhedral, {\it Alg. Geom. Top.}, vol. 11, 2011, pp. 1405--1433.

\bibitem{fnpt} E. Flapan, R. Naimi, J. Pommersheim and H. Tamvakis, Topological symmetry groups of graphs embedded in the 3-sphere, {\it Comment. Math. Helv.}, vol. 80, 2005, pp. 317--354.

\bibitem{fnt} E. Flapan, R. Naimi and H. Tamvakis,  Topological symmetry groups of complete graphs in the 3-sphere, {\it J. London Math. Soc.}, vol. 73, 2006, pp. 237--251.

\bibitem{fr} R. Frucht, Herstellung von graphen mit vorgegebener abstrakter Gruppe, {\it Compositio Math.}, vol. 6, 1938, pp. 239--250.

\bibitem{JS}W. Jaco and P. Shalen, {\it Seifert 
fibred spaces in 3-manifolds}, Memoirs 
Amer. Math. Soc., No. 220, Amer. Math. 
Soc., Providence (1979). 



\bibitem{Jo} K. Johannson, {\it Homotopy 
equivalences of 3--manifolds with 
boundaries}, Lecture Notes in Mathematics, 
vol 761, Springer-Verlag, New York, 
Berlin, Heidelberg, (1979).

\bibitem{mf}  J. Morgan, F. Fong,  {\it Ricci flow and geometrization of 3-manifolds}, University Lecture Series, vol. 53, American Mathematical Society, Providence, RI, 2010.


\bibitem{Mo} G. Mostow, {\it Strong Rigidity for 
Locally Symmetric Spaces}, Ann. Math. 
Studies, vol 78,Princeton University 
Press, Princeton, (1973).


\bibitem{si} J. Simon, Topological chirality of certain molecules, {\it Topology}, vol. 25, 1986, pp. 229--235.

\bibitem{Th} W. Thurston,
Three-dimensional manifolds, Kleinian groups and 
hyperbolic geometry, {\it Bull. Amer. Math. 
Soc.}, vol 6, 1982, pp. 357-381.

\end{thebibliography}
\end{document}